\def\@@and{\MakeLowercase{and}}
\theoremstyle{definition}
\newtheorem{defn}{Definition}[section]
\newtheorem{exam}[defn]{Example}
\newtheorem{rem}[defn]{Remark}
\theoremstyle{plain}
\newtheorem{thm}[defn]{Theorem}
\newtheorem{lem}[defn]{Lemma}
\newtheorem{prop}[defn]{Proposition}
\newtheorem{coro}[defn]{Corollary}
\newtheorem{claim}[defn]{Claim}
\title[M\MakeLowercase{ean} L\MakeLowercase{i}-Y\MakeLowercase{orke chaos for a sequence of operators on} B\MakeLowercase{anach spaces}] 
{M\MakeLowercase{ean} L\MakeLowercase{i}-Y\MakeLowercase{orke chaos for a sequence of operators on} B\MakeLowercase{anach spaces}}
\author[J. L\MakeLowercase{i}]{J\MakeLowercase{ian} Li}
\address[J. Li]{Department of Mathematics,
	Shantou University, Shantou, 515821, Guangdong, China}
\email{lijian09@mail.ustc.edu.cn}
\urladdr{https://orcid.org/0000-0002-8724-3050}
\author[X. W\MakeLowercase{ang}]{X\MakeLowercase{insheng} Wang}
\address[X. Wang]{Department of Mathematics,
	Shantou University, Shantou, 515821, Guangdong, China}
\email{wangxs@stu.edu.cn}
\urladdr{https://orcid.org/0000-0002-5287-8902}
\author[J. Z\MakeLowercase{hao}]{J\MakeLowercase{ianjie} Zhao}
\address[Z. Zhao]{School of Mathematics, Hangzhou Normal University, Hangzhou, 311121, Zhejiang, China}
\email{zjianjie@hznu.edu.cn}
\urladdr{https://orcid.org/0000-0003-0790-7038}
\subjclass[2020]{Primary: 47A16; Secondary: 37B05}
\keywords{Linear operator, mean Li-Yorke chaos,  absolutely mean irregular vector, 
mean equicontinuity, mean sensitivity, 
submultiplicative sequence}
\date{\today}
\begin{document}

\begin{abstract}
In this paper, we obtain the dichotomy for mean equicontinuity and mean sensitivity for a sequence of bounded linear operators from a Banach space to a normed linear space.
The mean Li-Yorke chaos for sequences and submultiplicative sequences of bounded linear operators are also studied.
Furthermore, several criteria for mean Li-Yorke chaos are established.
\end{abstract}

\maketitle

\section{Introduction}

It appears that universality is a generic phenomenon in analysis. 
In the survey \cite{G1999} Grosse-Erdmann discussed the theory of universal families including both of general theory and special universal families systematically. 
A universal family consists of continuous mappings $T_\lambda$ ($\lambda\in\Lambda$) between two topological spaces $X$ and $Y$, which has a universal element, i.e., $x\in X$ such that $\{T_\lambda x\colon\lambda\in\Lambda\}$ is dense in $Y$. 
In particular, the concept of hypercyclicity was discussed in \cite{G1999}, which is the study of the universality of a sequence of operators on a topological vector space. 
One special but important case is that when the sequence is generated by iterations of a single continuous operator, and such single continuous operator is called a hypercyclic operator.

A natural question is how to determine whether a sequence of operators or a single operator is hypercyclic. However it is not an easy task to make such a determination only using the definition of hypercyclicity. 
Hence it is necessary to establish some hypercyclicity criteria which give sufficient conditions under which the sequence of operators is hypercyclic and are easier to apply to specific examples. 
It is shown in \cite{BP1999} that a continuous linear operator $T$ on a Fr\'echet space satisfies the hypercyclicity criterion if and only if it is hereditarily hypercyclic, and if and only if $T\oplus T$ is hypercyclic.
For the case of sequence of operators, a more general result is obtained in \cite{BG2003}, which strengthens and extends the result in \cite{BP1999} and can be applied to an almost-commuting (refer to Section \ref{sec:LYsub} for the definition) sequence of operators.

When the family $\{T_\lambda\colon\lambda\in\Lambda\}$ forms a group under composition on a topological space, the universality is well known in topological dynamics under the name of topological transitivity, which is an important concept in the study of dynamics, and is related to another central concept --- chaos. 
Note that chaos gives a qualitative characterization of the complexity of systems and has been fully investigated. It is widely believed that Li-Yorke chaos is the first version of chaos, which was introduced by Li and Yorke in \cite{LY1975}.
However, the origin of this concept can be traced back even earlier, Poincar\'e has already observed chaotic phenomena.
With the need of research of dynamics, all kinds of chaos were established, such as Devaney chaos, mean Li-Yorke chaos, distributional chaos et al., refer to \cite{LY2016} and references therein for more information on this topic.
An interesting thing should be noted here that the operator which is Devaney chaotic must satisfy the hypercyclicity criterion given in \cite{BP1999}.

Now let us return to discuss the chaos theory concerning linear operators. The dynamics of linear systems is usually considered uncomplicated.
However, even for such linear systems, chaotic behavior can also been presented.
In 1990, Protopopescu reported in \cite{P1990} that a linear operator on an infinite dimensional linear space could be Devaney chaotic in one of his technical reports.
Moreover, various linear operators with all kinds of chaos including Li-Yorke chaos have been obtained, which promoted the investigation of the dynamics of general linear operators. 
In \cite{BBMP2011}, Li-Yorke and (uniform) distributional chaos for bounded operators on Banach spaces are studied. 
In \cite{BBMP2013}, a technical characterization of distributional chaos for bounded operators on an infinite dimensional separable Banach space is given. 
The mean Li-Yorke chaos of operators on Banach spaces is investigated in \cite{BBP2020}. Some examples are are provided in  \cite{BBP2020} that there exists distributionally chaotic operators that are not mean Li-Yorke chaotic, and it still open whether the converse implication holds (see Question 16 in \cite{BBP2020}).
See \cite{HSX2021} for a  generalization of mean Li-Yorke chaos for operators on Banach spaces. 
Using the techniques in topological dynamics, a uniform treatment of Li-Yorke chaos, mean Li-Yorke chaos and distributional chaos for continuous endomorphisms of completely metrizable groups is given in \cite{JL2022}, which is applied to the case of linear operators on Fr\'echet spaces, improving some results in the above references. 
Monographs \cite{BM2009} and \cite{GP2011} can be regarded as the introduction concerning this topic, where basic concepts and classical results related to chaos theory of linear operators can be found.

It is meaningful to consider the chaos theory of linear operators in a wide scope, such as the sequence of linear operators, which is not generated by a single continuous map. 
Recently, Conejero et al. studied distributional chaos for a family of operators on Fr\'echet spaces in \cite{CKMM2016}. In \cite{YHC2023}, the dichotomy of mean equicontinuity and mean sensitivity for non-autonomous linear systems is obtained.

In this paper, we will study mean Li-Yorke chaos for a sequence of bounded linear operators on Banach spaces, and generalize the results on a single operator to this setting.
We first obtain a  dichotomy of mean equicontinuity and mean sensitivity for
a sequence of operators.
Then we get some equivalent conditions for the existence of (dense) mean Li-Yorke scrambled sets.
Furthermore, under some conditions, we obtain more satisfied  equivalent conditions for the existence of (dense) mean Li-Yorke scrambled sets.

This paper is organized as follows. In Section~\ref{sec:me&ms}, we study the dichotomy for mean equicontinuity and mean sensitivity for sequences of bounded linear operators from a Banach space to a normed linear space. 
In Section~\ref{sec:MlY}, the mean Li-Yorke chaos for sequences of bounded linear operators are studied. 
In Section~\ref{sec:LYsub}, the mean Li-Yorke chaos for submultiplicative sequences of bounded linear operators are studied. 
Furthermore, some equivalent criteria for mean Li-Yorke chaos are established.

\section{Dichotomy for mean equicontinuity and mean sensitivity for a  sequence of operators}\label{sec:me&ms}

The concepts of  mean equicontinuity and mean sensitivity were first introduced in \cite{LTY2015} for dynamical systems on compact metric spaces.
In this section, we generalize the concepts of  mean equicontinuity and mean sensitivity for a sequence of operators. We will see in the next section that those concepts are closely related to mean Li-Yorke chaos.
\begin{defn}
    A sequence  $(T_i)_{i=1}^{\infty}$ of linear operators from a normed linear space $X$ to a normed linear space $Y$ is said to be
    \emph{mean equicontinuous} if for every $\varepsilon>0$ there exists $\delta>0$ such that for every $x,y\in X$ with $\Vert x-y\Vert<\delta$,
    \[
        \limsup_{n\to\infty}\frac{1}{n}\sum_{i=1}^{n}\Vert T_ix-T_iy\Vert<\varepsilon,
    \]
    and  \emph{mean sensitive} if  there is a constant $\delta>0$ such that for every $x\in X$ and $\varepsilon>0$ there exists some $y\in X$ with $\Vert x-y\Vert<\varepsilon$ and
    \[
        \limsup_{n\to\infty}\frac{1}{n}\sum_{i=1}^{n}\Vert T_ix-T_iy\Vert>\delta.
    \]
\end{defn}
We first study the equivalent characterizations of mean equicontinuity and mean sensitivity for general bounded linear operators, and obtain the following 
main result in this section.

\begin{thm}\label{thm:dich-mean-eq-mean-sen}
    Let $(T_i)_{i=1}^{\infty}$ be a sequence of bounded linear operators from a Banach space $X$ to a normed linear space $Y$. 
    Then either $(T_i)_{i=1}^{\infty}$ is mean equicontinuous or mean sensitive.
\end{thm}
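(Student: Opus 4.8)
The plan is to reduce both properties to the behaviour of a single translation-invariant quantity and then observe that mean sensitivity is precisely the failure of mean equicontinuity. First I would exploit linearity: since $\|T_ix-T_iy\|=\|T_i(x-y)\|$, the average $\frac1n\sum_{i=1}^n\|T_ix-T_iy\|$ depends only on the difference $z=x-y$. Writing
\[
  \Phi(z):=\limsup_{n\to\infty}\frac1n\sum_{i=1}^n\|T_iz\|\in[0,\infty],
\]
mean equicontinuity becomes the assertion that $\Phi$ is small near the origin, i.e. for every $\varepsilon>0$ there is $\delta>0$ with $\Phi(z)<\varepsilon$ whenever $\|z\|<\delta$; and, because the $y$ in the definition of mean sensitivity ranges over the whole ball $\{y:\|x-y\|<\varepsilon\}$ while $\Phi(x-y)$ depends only on $x-y$, mean sensitivity becomes: there is $\delta>0$ such that for every $\varepsilon>0$ there is $z$ with $\|z\|<\varepsilon$ and $\Phi(z)>\delta$.

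With both properties phrased through $\Phi$, the second step is to check that these two statements are negations of one another. If $(T_i)_{i=1}^\infty$ is not mean equicontinuous, there is $\varepsilon_0>0$ such that for every $\delta>0$ one can find $z$ with $\|z\|<\delta$ and $\Phi(z)\ge\varepsilon_0$. I would then take the sensitivity constant to be $\varepsilon_0/2$: given any $x\in X$ and any $\varepsilon>0$, apply the previous line with $\delta=\varepsilon$ to obtain such a $z$, and set $y:=x-z$; then $\|x-y\|=\|z\|<\varepsilon$ and $\Phi(x-y)=\Phi(z)\ge\varepsilon_0>\varepsilon_0/2$, which is exactly mean sensitivity. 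Conversely, mean equicontinuity rules out mean sensitivity (taking $\varepsilon$ equal to the putative sensitivity constant produces an immediate contradiction), so the two alternatives are mutually exclusive and one of them always holds.

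The genuinely delicate points here are merely bookkeeping: matching the strict inequality in the definition of mean sensitivity against the non-strict one produced by negating mean equicontinuity (handled by the harmless factor $\tfrac12$), and observing — via the translation $y=x-z$ — that a single witness $z$ serves simultaneously for every base point $x$, which is where linearity is essential. Notably, the bare dichotomy needs neither completeness of $X$ nor boundedness of the $T_i$; those hypotheses are what allow one to upgrade it to a quantitative characterization of mean equicontinuity (for instance, via the uniform boundedness principle, identifying mean equicontinuity with an estimate $\Phi(z)\le C\|z\|$), and I would record that refinement separately rather than folding it into the dichotomy argument.
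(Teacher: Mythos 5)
Your proof is correct, and it takes a genuinely different---and more elementary---route than the paper's. The paper deduces the dichotomy as an immediate corollary of its two characterization theorems: if mean equicontinuity fails, Theorem \ref{thm:mean-UBT} produces a vector $x$ with $\sup_{n}\frac{1}{n}\sum_{i=1}^{n}\Vert T_ix\Vert=\infty$, hence $\limsup_{n\to\infty}\frac{1}{n}\sum_{i=1}^{n}\Vert T_ix\Vert=\infty$, and Theorem \ref{thm:mean-sensitive} converts this into mean sensitivity; both of those theorems rest on Baire category arguments (the closed sets $X_k$ in Theorem \ref{thm:mean-UBT}, the open dense sets $X_n$ in Theorem \ref{thm:mean-sensitive}), which is exactly where completeness of $X$ and continuity of the $T_i$ enter. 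You bypass all of this: after reducing both notions to the translation-invariant quantity $\Phi(z)=\limsup_{n\to\infty}\frac{1}{n}\sum_{i=1}^{n}\Vert T_iz\Vert$, you observe that the ``for every $x$'' quantifier in mean sensitivity is vacuous (a single witness $z$ serves every base point via $y=x-z$), so mean sensitivity is literally the negation of mean equicontinuity, up to the strict/non-strict mismatch absorbed by the factor $\frac{1}{2}$. That quantifier bookkeeping is sound, and, as you note, it proves the dichotomy for arbitrary linear operators on an arbitrary normed space; this is worth recording, since in general topological dynamics sensitivity is strictly stronger than failure of equicontinuity, and it is linearity alone that collapses the two here. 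What your route does not give---and what the paper's heavier machinery buys---are the quantitative equivalences used repeatedly later on: mean equicontinuity $\Leftrightarrow$ absolute Ces\`aro boundedness ($\Phi(z)\leq C\Vert z\Vert$), and mean sensitivity $\Leftrightarrow$ existence (indeed residuality) of vectors with infinite upper Ces\`aro averages. Those equivalences genuinely need completeness and boundedness, and they are the inputs to Theorems \ref{thm:denseLi-Yorke-delta-chaos} and \ref{thm:dense-mean-irregular-manifold}, so the paper proves them anyway and gets the dichotomy for free; your argument is the cleaner proof of the dichotomy as stated.
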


We need the following definition of absolute Ces\`aro boundedness, which is introduced in~\cite{LH2015} for iterations of a linear operator.

\begin{defn}
    A sequence  $(T_i)_{i=1}^{\infty}$ of linear operators from a normed linear space $X$ to a normed linear space $Y$ is said to be \emph{absolutely Ces\`aro bounded} if there exists a constant $C>0$ such that for all $x\in X$,
    \[
        \sup_{n\in\mathbb{N}}\frac{1}{n}\sum_{i=1}^{n}\Vert T_i x\Vert\leq C\Vert x\Vert.
    \]
\end{defn}

Motivated by \cite{BBP2020}*{Theorem 4} and \cite{JL2022}*{Theorem 4.31}, we have the following result, which can be regarded as a mean
version of the uniform boundedness theorem for a sequence of linear operators.

\begin{thm}\label{thm:mean-UBT}
    Let $(T_i)_{i=1}^{\infty}$ be a sequence of bounded linear operators from a Banach space $X$ to a normed linear space $Y$.
    Then the following assertions are equivalent:
    \begin{enumerate}
        \item \label{thm1: ME}
              the sequence $(T_i)_{i=1}^{\infty}$  is mean equicontinuous;
        \item \label{thm2: x bounded}
             for all $x\in X$,
              \[
                  \sup_{n\in\mathbb{N}}\frac{1}{n}\sum_{i=1}^{n}\Vert T_i x\Vert<\infty;
              \]
        \item \label{thm3: aCb}
             the sequence $(T_i)_{i=1}^{\infty}$ is absolutely Ces\`aro bounded;
        \item \label{thm4: ME-sup}
             for every $\varepsilon>0$, there exists $\delta>0$ such that for every $x,y\in X$ with $\Vert x-y\Vert<\delta$,
              \[
                  \sup_{n\in\mathbb{N}}\frac{1}{n}\sum_{i=1}^{n}\Vert T_ix-T_iy\Vert<\varepsilon.
              \]

    \end{enumerate}
\end{thm}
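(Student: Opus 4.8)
The plan is to prove the four assertions equivalent by establishing the cyclic chain
(\ref{thm1: ME}) $\Rightarrow$ (\ref{thm2: x bounded}) $\Rightarrow$ (\ref{thm3: aCb}) $\Rightarrow$ (\ref{thm4: ME-sup}) $\Rightarrow$ (\ref{thm1: ME}), with the only substantial step being a Banach--Steinhaus argument for (\ref{thm2: x bounded}) $\Rightarrow$ (\ref{thm3: aCb}). The organizing device is to package the Ces\`aro averages as seminorms: for each $n\in\mathbb{N}$ set
\[
    p_n(x)=\frac{1}{n}\sum_{i=1}^{n}\Vert T_ix\Vert,\qquad x\in X.
\]
Because each $T_i$ is linear and bounded, $p_n$ is a \emph{continuous seminorm} on $X$: positive homogeneity and subadditivity are inherited from the norm on $Y$, and continuity follows from $p_n(x)\le(\max_{1\le i\le n}\Vert T_i\Vert)\Vert x\Vert$. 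In this language (\ref{thm2: x bounded}) reads $\sup_n p_n(x)<\infty$ for every $x$, while (\ref{thm3: aCb}) reads $\sup_n p_n(x)\le C\Vert x\Vert$ for some fixed $C$ and all $x$.

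For (\ref{thm1: ME}) $\Rightarrow$ (\ref{thm2: x bounded}), I would fix $\varepsilon=1$ and take the corresponding $\delta>0$ from mean equicontinuity. Given a nonzero $x\in X$, rescale to $x'=\tfrac{\delta}{2\Vert x\Vert}x$ so that $\Vert x'-0\Vert<\delta$; applying the definition to the pair $x',0$ gives $\limsup_n p_n(x')<1$. A finite limsup forces $\sup_n p_n(x')<\infty$, since beyond some index the terms are bounded and only finitely many (each finite) precede it. Homogeneity of $p_n$ then yields $\sup_n p_n(x)=\tfrac{2\Vert x\Vert}{\delta}\sup_n p_n(x')<\infty$.

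The crux is (\ref{thm2: x bounded}) $\Rightarrow$ (\ref{thm3: aCb}), which I would prove by the Baire category argument underlying the uniform boundedness theorem, adapted to seminorms. Put $A_k=\{x\in X:p_n(x)\le k\text{ for all }n\}=\bigcap_n p_n^{-1}([0,k])$; each $A_k$ is closed by continuity of the $p_n$, and (\ref{thm2: x bounded}) gives $X=\bigcup_k A_k$. As $X$ is a Banach space, Baire's theorem produces some $A_K$ containing a closed ball $\overline{B}(x_0,r)$. For $\Vert z\Vert\le r$ and every $n$, subadditivity gives $p_n(z)\le p_n(x_0+z)+p_n(x_0)\le 2K$, and rescaling the vector to norm $r$ yields $\sup_n p_n(x)\le (2K/r)\Vert x\Vert$ for all $x$, i.e.\ (\ref{thm3: aCb}) with $C=2K/r$. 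This is the one genuine obstacle, and completeness of $X$ enters only here.

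Closing the cycle is routine. For (\ref{thm3: aCb}) $\Rightarrow$ (\ref{thm4: ME-sup}), absolute Ces\`aro boundedness with constant $C$ gives $\sup_n p_n(x-y)\le C\Vert x-y\Vert$, so given $\varepsilon>0$ the choice $\delta=\varepsilon/C$ settles (\ref{thm4: ME-sup}). Finally (\ref{thm4: ME-sup}) $\Rightarrow$ (\ref{thm1: ME}) is immediate, since $\limsup_n\frac1n\sum_{i=1}^n\Vert T_ix-T_iy\Vert\le\sup_n\frac1n\sum_{i=1}^n\Vert T_ix-T_iy\Vert$. This completes the chain and hence the equivalence.
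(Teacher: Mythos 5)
Your proposal is correct and takes essentially the same route as the paper: the identical cyclic chain $(1)\Rightarrow(2)\Rightarrow(3)\Rightarrow(4)\Rightarrow(1)$, with rescaling by homogeneity for $(1)\Rightarrow(2)$ and the Baire category argument (uniform boundedness adapted to Ces\`aro averages) as the crux of $(2)\Rightarrow(3)$. The only cosmetic differences are your seminorm packaging, your use of the difference with the ball's center $x_0$ where the paper uses two shifted points inside the ball, and your explicit remark that a finite $\limsup$ forces a finite $\sup$ (a point the paper leaves implicit).
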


\begin{proof}
     $ (\ref{thm1: ME})\Rightarrow (\ref{thm2: x bounded})$.  There exists $\delta>0$ such that for every $x\in X$ with $\Vert x\Vert<\delta$,
  \[
        \limsup_{n\to\infty}\frac{1}{n}\sum_{i=1}^{n}\Vert T_ix\Vert<1.
    \]
    Then for any $x\in X$,
    \[
        \limsup_{n\to\infty}\frac{1}{n}\sum_{i=1}^{n}\Vert T_ix\Vert = \frac{2\Vert x\Vert}{\delta}
        \limsup_{n\to\infty}\frac{1}{n}\sum_{i=1}^{n}\Vert T_i(\tfrac{\delta}{2\Vert x\Vert}x)\Vert < \frac{2\Vert x\Vert}{\delta}<\infty.
    \]
  
     $(\ref{thm2: x bounded}) \Rightarrow (\ref{thm3: aCb})$.
     For every $k\in\mathbb{N}$, let
    \[
        X_k=\biggl\{x\in X\colon  \sup_{n\in\mathbb{N}}\frac{1}{n}\sum_{i=1}^{n}\Vert T_i x\Vert\leq k\biggr\}.
    \]
    Then $X_k$ is a closed subset of $X$.
    As $X=\bigcup_{k\in \mathbb{N}} X_k$, by the Baire category theorem
    there exists $k_0\in\mathbb{N}$ such that the interior of $X_{k_0}$ is non empty.
    Pick a point $x_0\in X$ and $\delta_0>0$ such that $B(x_0,\delta_0)\subset X_{k_0}$, where
    $B(x_0,\delta_0)=\{x\in X\colon \Vert x-x_0\Vert<\delta_0\}$.
    For any $x\in X$, one has
    \[
        x_0+ \tfrac{\delta_0}{3\Vert x\Vert} x, x_0+ \tfrac{2\delta_0}{3\Vert x\Vert} x \in X_{k_0}.
    \]
    Let $C=\frac{6k_0}{\delta_0}$.
    Then for any $n\in\mathbb{N}$,
    \begin{align*}
        \frac{1}{n}
        \sum_{i=1}^{n}\Vert T_i x\Vert & =
        \frac{3\Vert x\Vert}{\delta_0}\frac{1}{n} \sum_{i=1}^{n}\Vert T_i (x_0+ \tfrac{\delta_0}{3\Vert x\Vert} x) - T_i (x_0+ \tfrac{2\delta_0}{3\Vert x\Vert} x)\Vert
        \\
         & \leq \frac{3\Vert x\Vert}{\delta_0}
        \biggl(\frac{1}{n}\sum_{i=1}^{n}\Vert T_i (x_0+ \tfrac{\delta_0}{3\Vert x\Vert} x)\Vert
        +\frac{1}{n}\sum_{i=1}^{n}\Vert T_i (x_0+ \tfrac{2\delta_0}{3\Vert x\Vert} x)\Vert\biggr) \\
        & \leq \frac{3\Vert x\Vert}{\delta_0}(k_0+k_0)
        \leq C \Vert x\Vert.
    \end{align*}
    This shows that the sequence $(T_i)_{i=1}^{\infty}$ is absolutely Ces\`aro bounded.

     $ (\ref{thm3: aCb} )  \Rightarrow   (\ref{thm4: ME-sup})$.  For every $\varepsilon>0$, pick $\delta=\frac{\varepsilon}{C}$, where $C$ is the constant in the definition of absolute Ces\`aro boundedness for $(T_i)_{i=1}^{\infty}$. Then for every $x,y\in X$ with $\Vert x-y\Vert<\delta$,
    \begin{align*}
        \sup_{n\in\mathbb{N}}\frac{1}{n}\sum_{i=1}^{n}\Vert T_ix-T_iy\Vert =
        \sup_{n\in\mathbb{N}}\frac{1}{n}\sum_{i=1}^{n}\Vert T_i(x-y)\Vert \\
        \leq C \Vert x-y\Vert <C\delta=\varepsilon.
    \end{align*}

     $ (\ref{thm4: ME-sup})  \Rightarrow (\ref{thm1: ME}) $.  
     It is clear.
\end{proof}

\begin{exam}
    Let $X$ be a Banach space and $I\colon X\to X$ be the identity operator.
    For each $i\in\mathbb{N}$, let $T_i=nI$ if $i=2^n$ for some $n\in\mathbb{N}$, and $T_i=I$ otherwise.
    It is easy to see that the sequence $(T_i)_{i=1}^\infty$ is absolutely Ces\`aro bounded but not norm-bounded.
\end{exam}

\begin{rem}
Examples of mixing operators that are absolutely Ces\`aro bounded can be found, for instsance, in \cite{BBMP2013}*{Example 5} and in  \cite{LH2015}*{Theorem 3.5}. Actually, Example 5 in \cite{BBMP2013} is a backward shift on $\ell^1(\mathbb{N})$ such that there is no point in $X$ has distributionally unbounded orbit (by the proof, this operator is in fact absolutely Ces\`aro bounded). By Theorem~\ref{thm:mean-UBT} those operators are also mean equicontinuous.
\end{rem}

From the opposite side, corresponding to Theorem \ref{thm:mean-UBT}, we have the following equivalent characterization of mean sensitivity. Some of the following equivalent characterizations are the same as \cite{BBP2020}*{Theorem 4}, but the proof method is different. Here we mainly use the characterization of mean equicontinuity in Theorem \ref{thm:mean-UBT}.

\begin{thm} \label{thm:mean-sensitive}
    Let $(T_i)_{i=1}^{\infty}$ be a sequence of bounded linear operators from a Banach space $X$ to a normed linear space $Y$.
    Then the following assertions are equivalent:
    \begin{enumerate}
        \item \label{thm1: mean sensitive}
              the sequence $(T_i)_{i=1}^{\infty}$  is mean sensitive;
        \item \label{thm2: x unbounded}
              there exists $x\in X$ such that
              \[
                  \limsup_{n\to{\infty}}\frac{1}{n}\sum_{i=1}^{n}\Vert T_i x\Vert=\infty;
              \]
        \item \label{thm3: residual points in X}
              the collection
              \[
                  \biggl \{ x\in X\colon \limsup_{n\to{\infty}}\frac{1}{n}\sum_{i=1}^{n} \Vert T_i x\Vert=\infty \biggr \}
              \]
              is residual in $X$;

        \item \label{thm4: residual points in X*X}
              the collection
              \[
                  \biggl\{ (x,y)\in X\times X\colon \limsup_{n\to{\infty}}\frac{1}{n}\sum_{i=1}^{n}\Vert T_i (x-y)\Vert=\infty \biggr \}
              \]
              is residual in $X\times X$;

        \item \label{thm5: bounded sequence}
              there exists a bounded sequence  $(y_k)_k$ in $X$ and a sequence $(N_k)_k$ in $\mathbb{N}$ such that
              \[
                  \sup_k\frac{1}{N_k}\sum_{i=1}^{N_k}\Vert T_i y_k\Vert=\infty;
              \]
        \item \label{thm6: sequence to 0}
              there exists a sequence  $(y_k)_k$ in $X$ and a sequence $(N_k)_k$ in $\mathbb{N}$ such that $\lim_{k\to\infty}y_k=0$ and
              \[
                  \liminf_{k\to\infty} \frac{1}{N_k}\sum_{i=1}^{N_k}\Vert T_i y_k\Vert>0.
              \]
    \end{enumerate}
\end{thm}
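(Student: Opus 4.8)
The plan is to treat conditions \eqref{thm2: x unbounded}--\eqref{thm6: sequence to 0} as successive reformulations of the \emph{failure} of mean equicontinuity, and then to connect everything back to \eqref{thm1: mean sensitive} through the dichotomy of Theorem~\ref{thm:dich-mean-eq-mean-sen}. The backbone is the elementary observation that for a sequence of nonnegative reals one has $\sup_n a_n=\infty$ if and only if $\limsup_n a_n=\infty$; applied to $a_n=\frac1n\sum_{i=1}^n\Vert T_ix\Vert$, this makes assertion \eqref{thm2: x unbounded} exactly the negation of assertion \eqref{thm2: x bounded} of Theorem~\ref{thm:mean-UBT}. Since mean equicontinuity and mean sensitivity are mutually exclusive (taking $\varepsilon$ equal to the sensitivity constant in the equicontinuity definition yields an immediate contradiction) and, by Theorem~\ref{thm:dich-mean-eq-mean-sen}, at least one of them holds, I obtain: mean sensitive $\iff$ not mean equicontinuous $\iff$ not pointwise Ces\`aro bounded $\iff$ \eqref{thm2: x unbounded}. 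This settles \eqref{thm1: mean sensitive} $\iff$ \eqref{thm2: x unbounded}.

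For \eqref{thm2: x unbounded} $\iff$ \eqref{thm3: residual points in X} I would run a Baire category argument. Writing $A=\{x:\sup_n\frac1n\sum_{i=1}^n\Vert T_ix\Vert=\infty\}$, its complement decomposes as $\bigcup_k X_k$ with $X_k=\{x:\sup_n\frac1n\sum_{i=1}^n\Vert T_ix\Vert\le k\}$, each closed because $x\mapsto\sup_n\frac1n\sum_{i=1}^n\Vert T_ix\Vert$ is lower semicontinuous. The crucial point, borrowed from the proof of Theorem~\ref{thm:mean-UBT}, is that if some $X_k$ had nonempty interior then the sequence would be absolutely Ces\`aro bounded, hence mean equicontinuous, forcing $A=\emptyset$; consequently, whenever $A\ne\emptyset$ every $X_k$ is closed with empty interior, so $X\setminus A$ is meager and $A$ is residual. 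The reverse implication is immediate since $X$ is a Baire space. The passage to \eqref{thm4: residual points in X*X} repeats this on $X\times X$: setting $B=\{(x,y):\sup_n\frac1n\sum_{i=1}^n\Vert T_i(x-y)\Vert=\infty\}$, its complement is again a countable union of closed sets, and the key observation is that if one of them contained a product of balls $B(x_0,\delta)\times B(y_0,\delta)$, then, since $\{u-v:\Vert u\Vert,\Vert v\Vert<\delta\}$ is the ball of radius $2\delta$, the corresponding sublevel set in $X$ would contain a ball around $x_0-y_0$, once more forcing absolute Ces\`aro boundedness. Combined with the facts that $(x,0)\in B$ whenever $x\in A$ and that residual sets in the Baire space $X\times X$ are nonempty, this yields \eqref{thm2: x unbounded} $\iff$ \eqref{thm4: residual points in X*X}.

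Finally, \eqref{thm5: bounded sequence} and \eqref{thm6: sequence to 0} are scaling reformulations. For the implication into \eqref{thm5: bounded sequence} I would negate absolute Ces\`aro boundedness: for each $k$ there are $y_k$ with $\Vert y_k\Vert=1$ and $N_k$ with $\frac{1}{N_k}\sum_{i=1}^{N_k}\Vert T_iy_k\Vert>k$, giving a bounded sequence with unbounded averages; conversely \eqref{thm5: bounded sequence} is incompatible with absolute Ces\`aro boundedness, so \eqref{thm5: bounded sequence} $\Rightarrow$ \eqref{thm1: mean sensitive}. The equivalence \eqref{thm5: bounded sequence} $\iff$ \eqref{thm6: sequence to 0} is a normalization trick: from a bounded sequence whose averages $a_k$ tend to infinity along a subsequence, replacing $y_k$ by $y_k/\sqrt{a_k}$ produces a sequence tending to $0$ with averages $\sqrt{a_k}\to\infty$; and from a null sequence with averages bounded below by $c>0$, replacing $y_k$ by $y_k/\Vert y_k\Vert$ produces a norm-one sequence with averages at least $c/\Vert y_k\Vert\to\infty$.

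I expect the main obstacle to be the Baire category step — specifically, making precise that nonempty interior of a sublevel set forces absolute Ces\`aro boundedness, and then correctly transferring this to the product space in \eqref{thm4: residual points in X*X} via the subtraction map. The remaining implications are routine once the equivalence of \eqref{thm1: mean sensitive} and \eqref{thm2: x unbounded} with the failure of mean equicontinuity and of absolute Ces\`aro boundedness has been secured through Theorems~\ref{thm:mean-UBT} and~\ref{thm:dich-mean-eq-mean-sen}.
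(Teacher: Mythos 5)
Your proposal has a genuine circularity problem. The only bridge you build from conditions (2)--(6) back to mean sensitivity (1) is Theorem~\ref{thm:dich-mean-eq-mean-sen}, invoked explicitly in the first paragraph and again implicitly in ``(5) is incompatible with absolute Ces\`aro boundedness, so (5) $\Rightarrow$ (1).'' But in this paper the dichotomy is not an independent result: it is stated first and proved last, as a direct consequence of Theorems~\ref{thm:mean-UBT} \emph{and}~\ref{thm:mean-sensitive}. Its nontrivial half (not mean equicontinuous implies mean sensitive) is exactly the implication (2) $\Rightarrow$ (1) of the theorem you are trying to prove, so every return path to (1) in your argument presupposes the result. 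The other links are sound and non-circular: (1) $\Rightarrow$ (2) needs only mutual exclusivity plus Theorem~\ref{thm:mean-UBT}; your Baire-category equivalences (2) $\Leftrightarrow$ (3) and (2) $\Leftrightarrow$ (4) are correct, and in fact a clean alternative to the paper's explicit perturbation construction of a dense $G_\delta$; the normalizations (2) $\Rightarrow$ (5) $\Leftrightarrow$ (6) and (5) $\Rightarrow$ (2) (via failure of absolute Ces\`aro boundedness and Theorem~\ref{thm:mean-UBT}) are fine.

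The gap is easy to close with material you already have, and doing so mirrors what the paper does in its final step (6) $\Rightarrow$ (1). From your own category argument you know that the set $A=\{z\in X\colon \limsup_{n\to\infty}\frac1n\sum_{i=1}^n\Vert T_iz\Vert=\infty\}$ is residual, hence dense. Given any $x\in X$ and $\varepsilon>0$, pick $z\in A$ with $\Vert z\Vert<\varepsilon$ and set $y=x+z$; by linearity, $\limsup_{n\to\infty}\frac1n\sum_{i=1}^n\Vert T_ix-T_iy\Vert=\limsup_{n\to\infty}\frac1n\sum_{i=1}^n\Vert T_iz\Vert=\infty$, so the sequence is mean sensitive with any constant $\delta$. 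This gives (3) $\Rightarrow$ (1) directly, and then all six conditions are linked without the dichotomy: (1) $\Rightarrow$ (2) $\Leftrightarrow$ (3), (4); (2) $\Rightarrow$ (5) $\Leftrightarrow$ (6); (5) $\Rightarrow$ (2); (3) $\Rightarrow$ (1). Note that the paper must work harder at its closing step because it re-enters at (6), where the hypothesis is weakest (only a null sequence with averages bounded below), and so it needs a further Baire argument with the open dense sets $X_n$; your route, once de-circularized as above, closes the loop at (3), where the information is strongest, and is accordingly simpler. With that repair (and with Theorem~\ref{thm:dich-mean-eq-mean-sen} then deduced as the paper does, rather than assumed), the proof is correct.
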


\begin{proof}
     $(\ref{thm1: mean sensitive}) \Rightarrow (\ref{thm2: x unbounded})$. 
    Otherwise, if for any $x\in X$,
    $$\limsup_{n\to \infty} \frac{1}{n} \sum_{i=1}^{n} \Vert T_ix\Vert<\infty,$$
    by Theorem \ref{thm:mean-UBT}, $(T_i)_{i=1}^{\infty}$ is mean equicontinuous. 
    Let $\delta$ be the constant in the definition of mean sensitivity.
    On the one hand, mean equicontinuity implies that there exists $\delta'>0$
    such that for every $x,y\in X$ with $\Vert x-y\Vert<\delta'$, one has
    \[
        \limsup_{n\to \infty} \frac{1}{n} \sum_{i=1}^{n} \Vert T_ix -T_iy\Vert <\delta.
    \]
    On the other hand, mean sensitivity implies that for every $x\in X$ and above $\delta'$,
    there exists some $y\in X$ with $\Vert x-y\Vert <\delta'$ such that
    \[
        \limsup_{n\to \infty} \frac{1}{n} \sum_{i=1}^{n} \Vert T_ix -T_iy\Vert >\delta.
    \]
    Which is a contradiction.
    
     $(\ref{thm2: x unbounded})  \Rightarrow (\ref{thm3: residual points in X})$. 
    Assume that $x_0\in X$ such that
    $$\limsup_{n\to \infty} \frac{1}{n}
        \sum_{i=1}^{n} \Vert T_ix_0\Vert=\infty.$$
    Let \[
        Y=\biggl\{ x\in X\colon \limsup_{n\to{\infty}}\frac{1}{n}\sum_{i=1}^{n}\Vert T_i x\Vert=\infty \biggr \}.
    \]
    It is clear that
    \[
        Y =\bigcap_{k=1}^\infty \biggl\{ x\in X\colon \exists n\geq k \text{ s.t. }
        \frac{1}{n}\sum_{i=1}^{n}\Vert T_i x\Vert >k\biggr\}.\]
    Then $Y$ is a $G_\delta$ subset of $X$.
    It suffices to show that for any $x\in X$ and $\varepsilon>0$, there exists some $y\in Y$ such that
    $\Vert x-y \Vert <\varepsilon$.
     
    If $x\in Y$, there's nothing to prove.
    If $x\notin Y$, put $y=x+\frac{\varepsilon}{2\Vert x_0\Vert}x_0 $,
    then $\Vert x-y \Vert <\varepsilon$
    and
    \begin{align*}
        \limsup_{n\to \infty} \frac{1}{n} \sum_{i=1}^n
        \Vert T_iy\Vert & =
        \limsup_{n\to \infty} \frac{1}{n} \sum_{i=1}^n \Vert T_i(x+\tfrac{x_0}{2\Vert x_0\Vert } \varepsilon )\Vert  \\
        & \ge \limsup_{n\to \infty} \frac{1}{n} \sum_{i=1}^n \Vert T_i( \tfrac{x_0}{2\Vert x_0\Vert } \varepsilon )\Vert -\limsup_{n\to \infty} \frac{1}{n} \sum_{i=1}^n \Vert T_i x \Vert \\
        & =\frac{ \varepsilon}{2\Vert x_0\Vert }  \limsup_{n\to \infty} \frac{1}{n} \sum_{i=1}^n \Vert T_i x_0  \Vert -\limsup_{n\to \infty} \frac{1}{n} \sum_{i=1}^n \Vert T_i x \Vert    \\
         & =\infty.
    \end{align*}

     $(\ref{thm3: residual points in X}) \Rightarrow (\ref{thm4: residual points in X*X})$.  It follows from the linearity of $(T_i)_i$.

     $ (\ref{thm4: residual points in X*X}) \Rightarrow (\ref{thm5: bounded sequence})$.  It is obvious.

     $(\ref{thm5: bounded sequence}) \Rightarrow (\ref{thm6: sequence to 0})$. 
    Assume that $(z_k)_k$ is the bounded sequence in $X$ and
    $(N_k)_k$ is the sequence in $\mathbb{N}$ such that
    \[
        \sup_k \frac{1}{N_k}\sum_{i=1}^{N_k} \Vert T_iz_k \Vert =\infty.
    \]
    Let $y_k=\frac{N_k}{ \sum_{i=1}^{N_k} \Vert T_iz_k \Vert} z_k$,
    then $\lim_{k\to \infty}y_k=0$ and
    \[
        \liminf_{k\to \infty} \frac{1}{N_k}\sum_{i=1}^{N_k} \Vert T_iy_k \Vert =1>0.
    \]

     $ (\ref{thm6: sequence to 0}) \Rightarrow (\ref{thm1: mean sensitive})$. 
    Suppose the sequences $(y_k)_k$ and $(N_k)_k$ are given by condition $(\ref{thm6: sequence to 0})$, and let
    \[
        \delta=\liminf_{k\to\infty} \frac{1}{N_k}\sum_{i=1}^{N_k}\Vert T_i y_k\Vert.
    \]
    Without loss of generality, we may assume that $(N_k)_k$ is increasing.
    For each $n\in \mathbb{N}$, let
    \[
        X_n=\biggl\{ x\in X: \exists k>n \ \text{s.t.} \ \frac{1}{k}\sum_{i=1}^{k} \Vert T_ix \Vert >
        \frac{\delta}{2} -\frac{1}{n} \biggr\}.
    \]
    Then each $X_n$ is an open subset of $X$.
    We will show that it is also dense in $X$.
    Let $U$ be a nonempty open subset of $X$ and pick $x\in U$.
    If $x\notin X_n$, then for any $k>n$, one has
    \[
        \frac{1}{k}\sum_{i=1}^{k}\Vert T_ix \Vert \le \frac{\delta}{2} -\frac{1}{n}.
    \]
    Since
    \[
        \liminf_{k\to \infty} \frac{1}{N_k} \sum_{i=1}^{N_k} \Vert T_iy_k \Vert =\delta,
    \]
    there exists $N\in \mathbb{N}$ such that for all $k>N$,
    we have
    \[
        \frac{1}{N_k} \sum_{i=1}^{N_k} \Vert T_iy_k \Vert > \delta -\frac{1}{2n}.
    \]
    As $\lim_{k\to \infty} (y_k+x) = x\in U$,
    there exists $k_0\in \mathbb{N}$ with $k_0>N$ such that $y_{k_0}+x \in U$
    and $N_{k_0}>n$.
    Then
    \begin{align*}
        \frac{1}{N_{k_0}}\sum_{i=1}^{N_{k_0}}\Vert T_i (y_{k_0}+x)\Vert
         & \ge \frac{1}{N_{k_0}}\sum_{i=1}^{N_{k_0}}\Vert T_i y_{k_0}\Vert  -\frac{1}{N_{k_0}}\sum_{i=1}^{N_{k_0}}\Vert T_i x\Vert \\
         & \ge \delta-\frac{1}{2n} - \Bigl(\frac{\delta}{2} - \frac{1}{n}\Bigr)                                                    \\
         & =\frac{\delta}{2} + \frac{1}{2n} > \frac{\delta}{2}-\frac{1}{n}.
    \end{align*}
    Which implies that $y_{k_0}+x\in U\cap X_n$.
    Therefore, the set
    \[
        X_0= \biggl\{x\in X: \limsup_{n\to \infty} \frac{1}{n}\sum_{i=1}^{n}\Vert T_ix \Vert \ge \frac{\delta}{2} \biggr\} =\bigcap_{n=1}^{\infty}X_n
    \]
    is residual in $X$.
    Fix $x\in X$ and $\varepsilon>0$,
    since $X_0$ is residual, there exists $z\in X$ with $\Vert z \Vert <\varepsilon$ and
    \[
        \limsup_{n\to \infty} \frac{1}{n}\sum_{i=1}^{n}\Vert T_iz \Vert \ge \frac{\delta}{2}.
    \]
    Let $y=z+x$, then $\Vert x-y \Vert <\varepsilon$ and
    \[
        \limsup_{n\to \infty} \frac{1}{n}\sum_{i=1}^{n}\Vert T_ix -T_iy\Vert =\limsup_{n\to \infty} \frac{1}{n}\sum_{i=1}^{n}\Vert T_iz \Vert \ge \frac{\delta}{2}.
    \]
    This shows that the sequence $(T_i)_{i=1}^{\infty}$ is mean sensitive.
\end{proof}

\begin{rem}
It is shown in \cite{BBPW2018}*{Theorem 25} that
there exists an operator $T$ on $X=\ell^1(\mathbb{N})$ and an invertible operator operator $T$ on $X=\ell^1(\mathbb{Z})$ such that $T$ is distributional chaotic and for every non-zero vector $x\in X$,
    \[
        \limsup_{n\to \infty} \frac{1}{n} \sum_{i=1}^{n} \Vert T^i x\Vert=\infty.
    \]
\end{rem}

Theorem~\ref{thm:dich-mean-eq-mean-sen} is a direct consequence of 
 Theorems~\ref{thm:mean-UBT} and~\ref{thm:mean-sensitive}.

\section{Mean Li-Yorke chaos for a  sequence of operators}\label{sec:MlY}

In this section, we study the mean Li-Yorke chaos for sequences of bounded linear operators from a Banach space to a normed linear space.

\begin{defn}
    Let $(T_i)_{i=1}^{\infty}$ be a sequence of bounded linear operators from a normed linear space $X$ to a normed linear space $Y$.
    A pair $(x,y)\in X\times X$ is called \emph{mean asymptotic} if
    \[
        \lim_{n\to \infty} \frac{1}{n} \sum_{i=1}^{n} \Vert T_ix-T_iy \Vert =0,
    \]
    and \emph{mean proximal} if
    \[
        \liminf_{n\to \infty} \frac{1}{n} \sum_{i=1}^{n} \Vert T_ix-T_iy \Vert =0.
    \]
    Let $\text{MAsym}(T_i)$ and $\text{MProx}(T_i)$ denote the collection of all mean asympotic pairs and mean proximal pairs, and are called \emph{mean asympotic relation} and \emph{mean proximal relation} respectively.
    For $x\in X$, the \emph{mean asymptotic cell} and \emph{mean proximal cell} of $x$ are defined by
    $$\text{MAsym}(T_i,x)=\{y\in X: (x,y)\in \text{MAsym}(T_i) \} $$
    and
    $$\text{MProx}(T_i,x)=\{y\in X: (x, y)\in \text{MProx}(T_i)\}$$
    respectively.
\end{defn}

The following lemmas are analogues of \cite{JL2022}*{Lemmas 4.1 and 4.8}, which will be used later.
\begin{lem}\label{mean proximal}
    Let $(T_i)_{i=1}^{\infty}$ be a sequence of bounded linear operators from a normed linear space $X$ to a normed linear space $Y$. Then
    \begin{enumerate}
        \item \label{Lem: MProx cell G-delta}
              for every $x\in X$, the mean proximal cell of $x$ is a $G_{\delta}$ subset of $X$;
        \item \label{Lem: MProx G-delta}
             the mean proximal relation of $(T_i)_i$ is a $G_{\delta}$ subset of $X\times X$;
        \item \label{Lem: cell-delta}
              for every $\delta>0$ and $x\in X$, the collection
              \[
                  \biggl\{ y\in X: \limsup_{n\to \infty} \frac{1}{n} \sum_{i=1}^n \Vert T_ix- T_iy \Vert \ge \delta \biggr\}
              \]
              is a $G_{\delta}$ subset of $X$;
        \item \label{Lem: delta}
              for every $\delta>0$, the collection
              \[
                  \biggl\{ (x,y)\in X\times X: \limsup_{n\to \infty} \frac{1}{n} \sum_{i=1}^n \Vert T_ix-T_iy \Vert \ge \delta \biggr\}
              \]
              is a $G_{\delta}$ subset of $X\times X$;
        \item \label{Lem: cell-infty}
              for every $x\in X$, the set
              \[
                  \biggl\{ y\in X: \limsup_{n\to \infty} \frac{1}{n} \sum_{i=1}^n \Vert T_ix-T_iy \Vert =\infty \biggl \}
              \]
              is a $G_{\delta}$ subset of $X$;
        \item \label{Lem: infty}
              the set
              \[
                  \biggl\{ (x, y)\in X\times X: \limsup_{n\to \infty} \frac{1}{n} \sum_{i=1}^n \Vert T_ix-T_iy \Vert =\infty \biggl \}
              \]
              is a $G_{\delta}$ subset of $X\times X$.
    \end{enumerate}
\end{lem}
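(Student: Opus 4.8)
The plan is to exhibit each of the six sets explicitly as a countable intersection of open sets. The common engine is the observation that, for each fixed $n\in\mathbb{N}$, the map
\[
  S_n\colon X\times X\to[0,\infty),\qquad S_n(x,y)=\frac{1}{n}\sum_{i=1}^{n}\Vert T_ix-T_iy\Vert,
\]
is continuous: each $T_i$ is a bounded (hence continuous) linear operator, the norm on $Y$ is continuous, and a finite sum of continuous functions is continuous. Consequently, for fixed $x$ the map $y\mapsto S_n(x,y)$ is continuous on $X$ as well. Since the only analytic content is the behaviour of $\liminf$ and $\limsup$ of the nonnegative sequences $\bigl(S_n(x,y)\bigr)_n$, the six statements split into three pairs, and within each pair the set over $X$ and the set over $X\times X$ are handled by verbatim the same argument (replacing the single variable $y$ by the pair $(x,y)$).

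For the mean proximal cell and relation (items \eqref{Lem: MProx cell G-delta} and \eqref{Lem: MProx G-delta}) I would use that, for a nonnegative sequence $(a_n)_n$, one has $\liminf_{n} a_n=0$ if and only if for every $k\in\mathbb{N}$ and every $m\in\mathbb{N}$ there is some $n\geq m$ with $a_n<1/k$. Applying this with $a_n=S_n(x,y)$ gives
\[
  \mathrm{MProx}(T_i,x)=\bigcap_{k=1}^{\infty}\bigcap_{m=1}^{\infty}\bigcup_{n\geq m}\bigl\{y\in X\colon S_n(x,y)<\tfrac{1}{k}\bigr\}.
\]
Each set $\bigl\{y\colon S_n(x,y)<1/k\bigr\}$ is open by continuity of $y\mapsto S_n(x,y)$, hence so is the union over $n\geq m$, and the displayed set is therefore $G_\delta$. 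The same identity with $\{y\colon\cdot\}$ replaced by $\{(x,y)\colon\cdot\}$ yields item \eqref{Lem: MProx G-delta}.

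For the $\limsup$-level sets (items \eqref{Lem: cell-delta} and \eqref{Lem: delta}) I would use that $\limsup_n a_n\geq\delta$ if and only if for every $k$ and every $m$ there is $n\geq m$ with $a_n>\delta-1/k$, giving
\[
  \bigl\{y\in X\colon\limsup_{n}S_n(x,y)\geq\delta\bigr\}=\bigcap_{k=1}^{\infty}\bigcap_{m=1}^{\infty}\bigcup_{n\geq m}\bigl\{y\in X\colon S_n(x,y)>\delta-\tfrac{1}{k}\bigr\},
\]
again a countable intersection of open sets. For the infinite-level sets (items \eqref{Lem: cell-infty} and \eqref{Lem: infty}) I would replace the threshold $\delta-1/k$ by a diverging family: $\limsup_n a_n=\infty$ if and only if for every $M\in\mathbb{N}$ and every $m$ there is $n\geq m$ with $a_n>M$, so
\[
  \bigl\{y\in X\colon\limsup_{n}S_n(x,y)=\infty\bigr\}=\bigcap_{M=1}^{\infty}\bigcap_{m=1}^{\infty}\bigcup_{n\geq m}\bigl\{y\in X\colon S_n(x,y)>M\bigr\},
\]
which is $G_\delta$. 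The corresponding statements over $X\times X$ are obtained by the identical formulas with $S_n$ read as a function of the pair.

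There is no genuine obstacle here; the whole lemma is a matter of correctly translating the quantifier structure of $\liminf=0$, $\limsup\geq\delta$, and $\limsup=\infty$ into nested $\bigcap\bigcup$ expressions. The only point requiring a little care is to arrange that the innermost sets are \emph{open}, so that the final countable intersection is genuinely $G_\delta$ rather than, say, $F_{\sigma\delta}$: this is why strict inequalities $S_n<1/k$ and $S_n>\delta-1/k$ are used together with the continuity of $S_n$, and why the $1/k$ (respectively $M$) slack is introduced in the $\limsup$ cases so that each finite-stage constraint defines an open set.
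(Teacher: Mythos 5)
Your proposal is correct and follows essentially the same route as the paper: both express each set as a countable intersection of open sets obtained by translating the quantifier structure of $\liminf=0$, $\limsup\ge\delta$, and $\limsup=\infty$, with openness coming from continuity of the finite averages. The only difference is cosmetic — you use a double intersection over two indices $(k,m)$ where the paper diagonalizes into a single index $n$ (requiring $\exists k>n$ with threshold depending on $n$) — and, like the paper, you handle the $X\times X$ versions by the identical argument in the pair variable.
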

\begin{proof}
    $(\ref{Lem: MProx cell G-delta})$ Note that for every $x\in X$,
    \[
        \text{MProx}(T_i,x)=\bigcap_{n=1}^{\infty}\biggl\{ y\in X: \exists k>n  \ \text{s.t.} \  \frac{1}{k} \sum_{i=1}^{k}\Vert T_ix-T_iy \Vert <\frac{1}{n} \biggr\},
    \]
    then by the continuity of $(T_i)_i$, we can obtain the set is a $G_{\delta}$ subset of $X$.

    $(\ref{Lem: cell-delta})$ Note that for every $\delta>0$ and $x\in X$,
    \begin{align*}
          & \biggl\{ y\in X: \limsup_{n\to \infty} \frac{1}{n} \sum_{i=1}^n\Vert T_ix-T_iy \Vert \ge \delta\biggr\} \\
        = & \bigcap_{n=1}^{\infty}\biggl\{ y\in X: \exists k>n  \ \text{s.t.} \  \frac{1}{k} \sum_{i=1}^{k}\Vert T_ix-T_iy \Vert >\delta-\frac{1}{n} \biggr\},
    \end{align*}
   then by the continuity of $(T_i)_i$, the set is a $G_{\delta}$ subset of $X$.

    $(\ref{Lem: cell-infty})$ Note that for every $x\in X$,
    \begin{align*}
          & \biggl\{ y\in X: \limsup_{n\to \infty} \frac{1}{n} \sum_{i=1}^n\Vert T_ix-T_iy \Vert = \infty \biggr\}  \\
        = & \bigcap_{n=1}^{\infty}\biggl\{ y\in X: \exists k>n  \ \text{s.t.} \  \frac{1}{k} \sum_{i=1}^{k}\Vert T_ix-T_iy \Vert >n \biggr\},
    \end{align*}
    then by the continuity of $(T_i)_i$, the set is a $G_{\delta}$ subset of $X$.
    
    The proofs of $(\ref{Lem: MProx G-delta})$, $(\ref{Lem: delta})$ and $(\ref{Lem: infty})$ are similar with that of $(\ref{Lem: MProx cell G-delta})$, $(\ref{Lem: cell-delta})$ and $(\ref{Lem: cell-infty})$ respectively.
\end{proof}

\begin{lem}\label{observations 1}
    Let $(T_i)_{i=1}^{\infty}$ be a sequence of bounded linear operators from a Banach space $X$ to a normed linear space $Y$. Then
    \begin{enumerate}
        \item \label{x-0}
              For every $x\in X$, $\text{MAsym}(T_i,x)=x+\text{MAsym}(T_i, {\bf 0})$ and
              $\text{MProx}(T_i,x)=x+\text{MProx}(T_i, {\bf 0})$;
        \item \label{MAsym dense}
              $\text{MAsym}(T_i, {\bf 0})$ is dense in $X$ if and only if
              $\text{MAsym}(T_i)$ is dense in $X\times X$;
        \item \label{MAsym residual}
              If MAsym$(T_i, {\bf 0})$ is residual in $X$, then MAsym$(T_i, {\bf 0})=X$;
        \item \label{MProx dense}
              $\text{MProx}(T_i, {\bf 0})$ is dense in $X$ if and only if
              $\text{MProx}(T_i)$ is dense in $X\times X$.
    \end{enumerate}
\end{lem}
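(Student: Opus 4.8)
The plan is to reduce everything to the translation structure coming from linearity together with a density-transfer principle for the subtraction map $\phi\colon X\times X\to X$, $\phi(x,y)=y-x$.

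First I would dispatch $(\ref{x-0})$ using only linearity. Since each $T_i$ is linear, $\Vert T_ix-T_iy\Vert=\Vert T_i(x-y)\Vert=\Vert T_i(y-x)\Vert$, so every Ces\`aro average $\frac1n\sum_{i=1}^n\Vert T_ix-T_iy\Vert$ depends on $x,y$ only through $y-x$. Hence $(x,y)\in\mathrm{MAsym}(T_i)$ exactly when $({\bf 0},y-x)\in\mathrm{MAsym}(T_i)$, i.e. $y-x\in\mathrm{MAsym}(T_i,{\bf 0})$, which is the asserted identity $\mathrm{MAsym}(T_i,x)=x+\mathrm{MAsym}(T_i,{\bf 0})$; repeating the computation with $\liminf$ in place of $\lim$ gives the $\mathrm{MProx}$ statement.

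For $(\ref{MAsym dense})$ and $(\ref{MProx dense})$ the key remark is that, with $M$ denoting the relevant cell of ${\bf 0}$, the same computation shows $\mathrm{MAsym}(T_i)=\phi^{-1}(M)$ (respectively $\mathrm{MProx}(T_i)=\phi^{-1}(M)$). The map $\phi$ is a continuous linear surjection, and it is open, since the image of $B(x_0,r)\times B(y_0,r)$ contains $B(y_0-x_0,r)$ (take $x=x_0$, $y=x_0+w$). For any continuous open surjection a subset $A$ of the target is dense if and only if $\phi^{-1}(A)$ is dense in the source: if $A$ is dense and $O$ is nonempty open then $\phi(O)$ is nonempty open and meets $A$, so $O$ meets $\phi^{-1}(A)$; conversely if $\phi^{-1}(A)$ is dense and $W$ is nonempty open then $\phi^{-1}(W)$ is nonempty open, meets $\phi^{-1}(A)$, and any common point is sent into $A\cap W$. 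Taking $A=M$ proves both equivalences simultaneously, so these two parts require no separate argument.

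The hard part will be $(\ref{MAsym residual})$. First I would observe that $M=\mathrm{MAsym}(T_i,{\bf 0})$ is a \emph{linear subspace}: because $\Vert T_i(\alpha z)\Vert=|\alpha|\,\Vert T_iz\Vert$ and $\Vert T_i(z+z')\Vert\le\Vert T_iz\Vert+\Vert T_iz'\Vert$, the defining condition $\lim_n\frac1n\sum_{i=1}^n\Vert T_iz\Vert=0$ is preserved under scalar multiplication and addition. It then remains to prove the purely topological fact that a residual linear subspace of a Banach space must be the whole space. Suppose not and choose $v\in X\setminus M$. Since $M$ is a subspace, the coset $v+M$ is disjoint from $M$, so $v+M\subseteq X\setminus M$; but $X\setminus M$ is meager because $M$ is residual, whereas $v+M$ is the image of the residual set $M$ under the homeomorphism $z\mapsto v+z$ and is therefore itself residual. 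In a nonempty Baire space no set can be at once residual and meager, a contradiction, so $M=X$. This is exactly where completeness of $X$ is used, to guarantee that $X$ is a Baire space. Note that $\mathrm{MProx}(T_i,{\bf 0})$ need not be a subspace, which is why the analogous conclusion is claimed only for $\mathrm{MAsym}$.
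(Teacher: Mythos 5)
Your proof is correct, and it is organized somewhat differently from the paper's. For parts (2) and (4) the paper decomposes the relation as $\mathrm{MAsym}(T_i)=\bigcup_{x\in X}\{x\}\times\mathrm{MAsym}(T_i,x)$, observes that each fiber is a translate of the cell of $\mathbf{0}$, and thereby proves the direction ``dense cell $\Rightarrow$ dense relation''; the converse implication is left implicit. Your identification $\mathrm{MAsym}(T_i)=\phi^{-1}\bigl(\mathrm{MAsym}(T_i,\mathbf{0})\bigr)$ with $\phi(x,y)=y-x$ a continuous open surjection, together with the density-transfer principle for such maps, yields both directions of the stated ``if and only if'' in one stroke, so your treatment of these two parts is actually more complete than the paper's. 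For part (3) the two arguments are close cousins, both resting on the Baire property of the Banach space $X$: the paper fixes $x$, picks a point $y\in\mathrm{MAsym}(T_i,\mathbf{0})\cap(\mathrm{MAsym}(T_i,\mathbf{0})-x)$ (two residual sets must meet), and then uses transitivity of the mean asymptotic relation together with part (1) to conclude $x\in\mathrm{MAsym}(T_i,\mathbf{0})$; you instead isolate the algebraic observation that $\mathrm{MAsym}(T_i,\mathbf{0})$ is a linear subspace and invoke the general fact that a residual linear subspace of a Banach space is the whole space, proved by your coset-versus-meagerness contradiction. The logical content is essentially the same --- in both cases the additive structure of the cell converts ``residual translates intersect'' into ``the cell absorbs every vector'' --- but your formulation cleanly separates the algebra (subspace) from the topology (residual implies everything), and you correctly flag why the analogous conclusion is not claimed for $\mathrm{MProx}(T_i,\mathbf{0})$, which need not be a subspace.
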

\begin{proof}
    $(\ref{x-0})$ It follows from that $\Vert T_ix_1-T_ix_2 \Vert =\Vert T_i (x_1-x_2) \Vert$ for all $i\in \mathbb{N}$ and $x_1, x_2\in X$.
   
    $(\ref{MAsym dense})$ If $\text{MAsym}(T_i, {\bf 0})$ is dense in $X$, By $(\ref{x-0})$ for any $x\in X$, $\text{MAsym}(T_i, x)$ is also dense in $X$.
    Since $\text{MAsym}(T_i)=\cup_{x\in X} \{x\} \times \text{MAsym}(T_i, x)$, $\text{MAsym}(T_i)$ is dense in $X\times X$.
    Using the same method, we can obtain $(\ref{MProx dense})$.

    (\ref{MAsym residual}) Fix any $x\in X$, then
    \[
        \text{MAsym}(T_i, {\bf 0})-x=\{y-x\in X: y\in \text{MAsym}(T_i, {\bf 0})\}
    \]
    is also residual in $X$.
    Pick
    \[
        y\in \text{MAsym}(T_i, {\bf 0})\cap (\text{MAsym}(T_i, {\bf 0})-x).
    \]
    Then $(y, {\bf 0})$ and $(y+x, {\bf 0})$ are mean asymptotic.
    And the relation $ \text{MAsym}(T_i)$ is transitive, $(y, y+x)\in  \text{MAsym}(T_i)$
    and then $(x, {\bf 0})\in \text{MAsym}(T_i)$, i.e., $x\in \text{MAsym}(T_i, {\bf 0})$.
\end{proof}

 Now we present the relevant definitions for sequences of bounded linear operators $(T_i)_{i=1}^{\infty}$ from a normed linear space $X$ to a normed linear space $Y$. 
\begin{defn}
    Let $(T_i)_{i=1}^{\infty}$ be a sequence of bounded linear operators from a normed linear space $X$ to a normed linear space $Y$.
    A pair $(x,y)\in X\times X$ is called a \emph{mean Li-Yorke pair} if
    \[
        \liminf_{n\to \infty} \frac{1}{n} \sum_{i=1}^{n} \Vert T_ix-T_iy \Vert =0 \ \text{and} \
        \limsup_{n\to \infty} \frac{1}{n} \sum_{i=1}^{n} \Vert T_ix-T_iy \Vert >0.
    \]
    That is $(x,y)$ is mean proximal but not mean asymptotic.
    For a constant $\delta>0$, a pair $(x,y)\in X\times X$ is called a \emph{mean Li-Yorke $\delta$ pair} if
    \[
        \liminf_{n\to \infty} \frac{1}{n} \sum_{i=1}^{n} \Vert T_ix-T_iy \Vert =0 \ \text{and} \
        \limsup_{n\to \infty} \frac{1}{n} \sum_{i=1}^{n} \Vert T_ix-T_iy \Vert \geq \delta.
    \]
    A pair $(x,y)\in X\times X$ is called a \emph{mean Li-Yorke extreme pair} if
    \[
        \liminf_{n\to \infty} \frac{1}{n} \sum_{i=1}^{n} \Vert T_ix-T_iy \Vert =0 \ \text{and} \
        \limsup_{n\to \infty} \frac{1}{n} \sum_{i=1}^{n} \Vert T_ix-T_iy \Vert =\infty.
    \]
    A subset $K$ of $X$ is called \emph{mean Li-Yorke scrambled} (\emph{mean Li-Yorke $\delta$-scrambled}, \emph{mean Li-Yorke extremely scrambled}) if any two distinct points $x,y\in K$ form a mean Li-Yorke (mean Li-Yorke $\delta$, mean Li-Yorke extreme ) pair.

    We say that $(T_i)_{i=1}^{\infty}$ is \emph{mean Li-Yorke chaotic} (\emph{mean Li-Yorke $\delta$-chaotic}, \emph{mean Li-Yorke extremely chaotic}) if there exists an uncountable mean Li-Yorke scrambled (mean Li-Yorke $\delta$-scrambled, mean Li-Yorke extremely scrambled) subset of $X$.

    We say that $(T_i)_{i=1}^{\infty}$ is \emph{densely mean Li-Yorke chaotic} if there exists a densely uncountable mean Li-Yorke scrambled subset of $X$. Similarly, one can define \emph{densely mean Li-Yorke $\delta$-chaotic} and \emph{densely mean Li-Yorke extremely chaotic} for $(T_i)_{i=1}^{\infty}$.
\end{defn}

\begin{defn}
    Let $(T_i)_{i=1}^{\infty}$ be a sequence of bounded linear operators from a normed linear space $X$ to a normed linear space $Y$. A vector $x\in X$ is called \emph{absolutely mean semi-irregular} if
    \[
        \liminf_{n\to \infty} \frac{1}{n} \sum_{i=1}^{n} \Vert T_ix \Vert =0 \ \text{and} \
        \limsup_{n\to \infty} \frac{1}{n} \sum_{i=1}^{n} \Vert T_ix \Vert >0.
    \]
    and \emph{absolutely mean irregular} if
    \[
        \liminf_{n\to \infty} \frac{1}{n} \sum_{i=1}^{n} \Vert T_ix \Vert =0 \ \text{and} \
        \limsup_{n\to \infty} \frac{1}{n} \sum_{i=1}^{n} \Vert T_ix \Vert =\infty.
    \]
\end{defn}

\cite{BBP2020}*{Theorem 5} showed the case of iteration of an operator for the following results, and we now give the case of a sequence of bounded linear operators.

\begin{prop} \label{mean Li-Yorke-pair-vector}
    Let $(T_i)_{i=1}^{\infty}$ be a sequence of bounded linear operators from a normed linear space $X$ to a normed linear space $Y$. Then the following assertions are equivalent:
    \begin{enumerate}
        \item \label{Prop: mean Li-Yorke chaotic}
        $(T_i)_i$ is mean Li-Yorke chaotic;
        \item \label{mean Li-Yorke scrambled pair}
        there exists a mean Li-Yorke pair;
        \item \label{mean semi-irregular vector}
        there exists an absolutely mean semi-irregular vector.
    \end{enumerate}
\end{prop}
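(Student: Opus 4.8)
The plan is to establish the cycle $(\ref{Prop: mean Li-Yorke chaotic})\Rightarrow(\ref{mean Li-Yorke scrambled pair})\Rightarrow(\ref{mean semi-irregular vector})\Rightarrow(\ref{Prop: mean Li-Yorke chaotic})$, using linearity throughout so that pairs, vectors and scrambled sets can all be compared through the single identity $\Vert T_ix-T_iy\Vert=\Vert T_i(x-y)\Vert$. The first implication is immediate: a mean Li-Yorke chaotic system has, by definition, an uncountable mean Li-Yorke scrambled set, and any two of its (uncountably many) distinct points already constitute a mean Li-Yorke pair.

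For $(\ref{mean Li-Yorke scrambled pair})\Rightarrow(\ref{mean semi-irregular vector})$ I would take a mean Li-Yorke pair $(x,y)$ and set $z=x-y$. Since $\Vert T_ix-T_iy\Vert=\Vert T_iz\Vert$ for every $i$, the two defining conditions of the pair transcribe verbatim into $\liminf_n\frac1n\sum_{i=1}^n\Vert T_iz\Vert=0$ and $\limsup_n\frac1n\sum_{i=1}^n\Vert T_iz\Vert>0$, which is exactly the statement that $z$ is absolutely mean semi-irregular.

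The one substantive step is $(\ref{mean semi-irregular vector})\Rightarrow(\ref{Prop: mean Li-Yorke chaotic})$. Given an absolutely mean semi-irregular vector $z$ (necessarily nonzero, as its $\limsup$ is positive), I would take the line $K=\{\lambda z:\lambda\in\mathbb{R}\}$ through $z$, which is uncountable because $z\neq\mathbf 0$. For distinct scalars $\lambda,\mu$ the difference is $(\lambda-\mu)z$, a nonzero multiple of $z$, and pulling $|\lambda-\mu|$ out of the norm and past the Ces\`aro mean gives
\[
    \liminf_{n\to\infty}\frac1n\sum_{i=1}^n\Vert T_i(\lambda z-\mu z)\Vert=|\lambda-\mu|\liminf_{n\to\infty}\frac1n\sum_{i=1}^n\Vert T_iz\Vert=0,
\]
while the corresponding $\limsup$ equals $|\lambda-\mu|\limsup_n\frac1n\sum_{i=1}^n\Vert T_iz\Vert>0$. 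Hence every pair of distinct points of $K$ is a mean Li-Yorke pair, so $K$ is an uncountable mean Li-Yorke scrambled set and $(T_i)_i$ is mean Li-Yorke chaotic.

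I do not anticipate any genuine obstacle, since the whole argument reduces to the homogeneity $\Vert T_i(\lambda x)\Vert=|\lambda|\,\Vert T_ix\Vert$ together with the observation that a one-dimensional line through a single semi-irregular vector is already a scrambled set. The only points worth recording carefully are that the scalar field is uncountable (so that $K$ is genuinely uncountable) and that semi-irregularity is preserved under nonzero scaling of the difference vector, so that no two distinct points of $K$ can be merely mean asymptotic.
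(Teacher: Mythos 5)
Your proof is correct and follows essentially the same route as the paper: the trivial implication $(\ref{Prop: mean Li-Yorke chaotic})\Rightarrow(\ref{mean Li-Yorke scrambled pair})$, the difference vector $x-y$ for $(\ref{mean Li-Yorke scrambled pair})\Rightarrow(\ref{mean semi-irregular vector})$, and the scalar line through the semi-irregular vector as the uncountable scrambled set for $(\ref{mean semi-irregular vector})\Rightarrow(\ref{Prop: mean Li-Yorke chaotic})$. The only difference is that you write out the homogeneity computation that the paper dismisses as ``easy to check,'' which is a welcome addition rather than a deviation.
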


\begin{proof}
    $(\ref{Prop: mean Li-Yorke chaotic}) \Rightarrow (\ref{mean Li-Yorke scrambled pair})$ is trivial.

    $(\ref{mean Li-Yorke scrambled pair}) \Rightarrow (\ref{mean semi-irregular vector})$. 
    If $(x,y)$ is a mean Li-Yorke pair, then $u=x-y$ is an absolutely mean semi-irregular vector.

    $(\ref{mean semi-irregular vector}) \Rightarrow (\ref{Prop: mean Li-Yorke chaotic})$. If $x\in X$ is an absolutely mean semi-irregular vector, then it is easy to check that
    $\{\lambda u: \lambda\in \mathbb{K}\}$ is an uncountable mean Li-Yorke scrambled set.
\end{proof}

By the similar proof of Proposition \ref{mean Li-Yorke-pair-vector}, we have the following result.
\begin{prop}
    Let $(T_i)_{i=1}^{\infty}$ be a sequence of bounded linear operators from a normed linear space $X$ to a normed linear space $Y$. Then the following assertions are equivalent:
    \begin{enumerate}
        \item $(T_i)_i$ is mean Li-Yorke extremely chaotic;
        \item there exists a mean Li-Yorke extreme pair;
        \item there exists an absolutely mean irregular vector.
    \end{enumerate}
\end{prop}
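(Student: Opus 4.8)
The plan is to mimic the three-step cycle $(\ref{Prop: mean Li-Yorke chaotic})\Rightarrow(\ref{mean Li-Yorke scrambled pair})\Rightarrow(\ref{mean semi-irregular vector})\Rightarrow(\ref{Prop: mean Li-Yorke chaotic})$ used in the proof of Proposition~\ref{mean Li-Yorke-pair-vector}, exploiting the fact that linearity reduces every statement about a pair $(x,y)$ to a statement about the single difference vector $u=x-y$, since $\Vert T_ix-T_iy\Vert=\Vert T_i(x-y)\Vert$ for every $i$. The only change from the original argument is that the condition $\limsup>0$ is replaced throughout by $\limsup=\infty$, and this change costs nothing because $\limsup=\infty$ is invariant under multiplication by a positive constant, exactly as $\limsup>0$ was.

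First, the implication ``mean Li-Yorke extremely chaotic $\Rightarrow$ existence of a mean Li-Yorke extreme pair'' is immediate: an uncountable mean Li-Yorke extremely scrambled set contains at least two distinct points, and by definition any two distinct points of such a set form a mean Li-Yorke extreme pair. For the second implication, I would take a mean Li-Yorke extreme pair $(x,y)$ and set $u=x-y$. The identity $\Vert T_ix-T_iy\Vert=\Vert T_iu\Vert$ shows that the Ces\`aro averages of $\Vert T_iu\Vert$ coincide termwise with those of $\Vert T_ix-T_iy\Vert$, so $u$ inherits $\liminf=0$ and $\limsup=\infty$; that is, $u$ is an absolutely mean irregular vector.

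The one step requiring a small verification is the closing implication ``existence of an absolutely mean irregular vector $\Rightarrow$ mean Li-Yorke extremely chaotic''. Given an absolutely mean irregular vector $u$, note first that $u\neq{\bf 0}$, since otherwise both averages would vanish and $\limsup=\infty$ would fail. I would then consider the line $K=\{\lambda u:\lambda\in\mathbb{K}\}$, which is uncountable because the scalar field $\mathbb{K}$ is uncountable and $u\neq{\bf 0}$. For distinct $\lambda,\mu\in\mathbb{K}$ the difference is $(\lambda-\mu)u$, so $\Vert T_i(\lambda u)-T_i(\mu u)\Vert=|\lambda-\mu|\,\Vert T_iu\Vert$ with $|\lambda-\mu|>0$; multiplying all the averages by the fixed positive constant $|\lambda-\mu|$ preserves both $\liminf=0$ and $\limsup=\infty$. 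Hence every pair of distinct points of $K$ is a mean Li-Yorke extreme pair, and $K$ is an uncountable mean Li-Yorke extremely scrambled set.

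I do not expect any genuine obstacle here, as the argument is a transparent adaptation of Proposition~\ref{mean Li-Yorke-pair-vector}. The only point deserving care is the behaviour of the $\limsup$ under rescaling: one should record explicitly that scaling by a positive constant leaves the value $\infty$ unchanged, which is what makes the homogeneity argument in the final implication go through verbatim.
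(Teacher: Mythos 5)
Your proposal is correct and is exactly the argument the paper intends: the paper proves this proposition by remarking that it follows ``by the similar proof of Proposition~\ref{mean Li-Yorke-pair-vector}'', and your three-step cycle with $u=x-y$ and the scrambled line $\{\lambda u\colon\lambda\in\mathbb{K}\}$ is precisely that adaptation, with the correct observation that scaling by $|\lambda-\mu|>0$ preserves both $\liminf=0$ and $\limsup=\infty$.
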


We need the following topological tool, named Mycielski Theorem.
\begin{thm}\label{Mycielski Theorem}  \cite{Mycielski1964} 
    Let $X$ be a completely metrizable space without isolated points.
    If $R$ is a dense $G_\delta$ subset of $X\times X$, then there exists a $\sigma$-Cantor subset $K$ of $X$ such that for every two distinct points $x,y\in K$, $(x,y)\in R$.
    In addition, if $X$ is separable then we can require that the $\sigma$-Cantor set is dense in $X$.
\end{thm}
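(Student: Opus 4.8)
The plan is to realize $R$ as a countable intersection of dense open sets and to build the desired set by a Cantor-scheme construction driven by the Baire category theorem. Fix a complete metric $d$ inducing the topology of $X$; since $X$ has no isolated points it is perfect. Write $R=\bigcap_{n\ge 1}U_n$ with each $U_n$ open and dense in $X\times X$, and, replacing $U_n$ by $U_1\cap\cdots\cap U_n$, assume the sequence is decreasing; I will also intersect each $U_n$ with its transpose (which is again dense open, the transpose being a homeomorphism of $X\times X$) so that the conditions obtained below hold in both coordinate orders. The core is to construct nonempty open sets $\{V_s:s\in 2^{<\omega}\}$ indexed by finite binary strings so that, writing $s^\frown i$ for the one-step extensions and $n=|s|$: (a) $\overline{V_{s^\frown i}}\subseteq V_s$ for $i\in\{0,1\}$; (b) $\overline{V_{s^\frown 0}}\cap\overline{V_{s^\frown 1}}=\emptyset$ and $\operatorname{diam}V_s<2^{-n}$; and, crucially, (c) for all distinct $s,t\in 2^{n}$ one has $\overline{V_s}\times\overline{V_t}\subseteq U_{n}$.

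The engine is the inductive step passing from level $n$ to level $n+1$. Given the $V_s$ for $|s|=n$, I would choose centers for the $2^{n+1}$ children inside the finite product $P=\prod_{s\in 2^{n+1}}V_{s^-}$, where $s^-$ denotes the parent of $s$; this $P$ is an open subset of $X^{2^{n+1}}$ and hence a Baire space. For each pair of distinct nodes $s,t\in 2^{n+1}$, the set of tuples $(p_u)_u\in P$ with $(p_s,p_t)\in U_{n+1}$ is open, and it is dense, being the preimage of the dense open set $U_{n+1}\cap(V_{s^-}\times V_{t^-})$ under the open coordinate projection $P\to V_{s^-}\times V_{t^-}$; the set where all coordinates are pairwise distinct is likewise dense open, using that $X$ is perfect. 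Intersecting these finitely many dense open subsets of $P$ yields, by the Baire category theorem, a tuple $(p_s)_{s\in 2^{n+1}}$ of pairwise distinct points with $(p_s,p_t)\in U_{n+1}$ for all $s\neq t$. Taking $V_s$ to be a sufficiently small open ball about $p_s$ then meets all of (a)--(c) at level $n+1$ simultaneously, since $p_s\in V_{s^-}$, the relation $U_{n+1}$ is open, the $p_s$ are distinct, and $X$ is perfect.

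From the scheme I would read off the set. For each branch $\alpha\in 2^{\omega}$ the nested closed sets $\overline{V_{\alpha|n}}$ have diameters tending to $0$, so by completeness they meet in a single point $x_\alpha$; distinct branches give distinct points by (b), and $\alpha\mapsto x_\alpha$ is a continuous injection of the compact space $2^{\omega}$, hence a homeomorphism onto a Cantor set $K$. If $\alpha\neq\beta$ first differ at level $k$, then $\alpha|n\neq\beta|n$ for every $n>k$, so by (c) $(x_\alpha,x_\beta)\in\overline{V_{\alpha|n}}\times\overline{V_{\beta|n}}\subseteq U_{n}$ for all $n>k$; as the $U_n$ decrease, this forces $(x_\alpha,x_\beta)\in\bigcap_n U_n=R$, which is exactly the required scrambling.

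For the separable addendum a single Cantor set is compact and cannot be dense, so I would instead produce a countable union of Cantor sets. Fixing a countable base $\{W_m\}$ of nonempty open sets, I would run the same construction as one staged induction over a forest: at stage $m$ a new root is born inside $W_m$, and at each stage every currently active leaf (finitely many, coming from the roots born so far) is refined one level deeper, with property (c) imposed on \emph{all} pairs of active leaves at once via the same finite-product Baire argument, using $U_n$ at stage $n$. Each root grows into a Cantor set $C_m\subseteq W_m$, so $\bigcup_m C_m$ is dense; and if $x,y$ are distinct points of this union their defining leaves become and remain distinct active leaves from some stage $n_0$ on, whence $(x,y)\in U_n$ for all $n\ge n_0$ and thus $(x,y)\in R$. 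The main obstacle throughout is precisely this bookkeeping: keeping the cross-pair condition (c) alive for every pair of active nodes while continually seeding new subtrees, which is exactly what the finite-product Baire category step is engineered to guarantee.
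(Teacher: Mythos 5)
The paper offers no proof of this result at all --- it is invoked purely as a citation to Mycielski's 1964 paper --- so the only comparison available is with the classical argument itself, and your proof is a correct, self-contained rendition of it. The single-tree construction is sound: the symmetrization and monotonization of the $U_n$, the three conditions (a)--(c) on the scheme, the finite-product density argument for choosing the centers $(p_s)$, and the passage to the limit along branches all fit together as claimed; note only that no appeal to the Baire category theorem is actually needed there, since a \emph{finite} intersection of dense open subsets of $P$ is automatically dense in any topological space, and that for the first assertion your single Cantor set already suffices, being trivially a $\sigma$-Cantor set. The forest construction for the separable addendum is also the right mechanism, and you correctly identify the one point where the argument could break --- keeping the cross-pair condition alive between \emph{all} currently active leaves, including the newly seeded root --- and close it with the same finite-product step, using $U_n$ at stage $n$ so that any two distinct limit points have distinct active leaves from some stage on and hence lie in $\bigcap_n U_n = R$. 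Two small polish items: when a root is born inside the basic open set $W_m$ you should take its ball with closure contained in $W_m$, so that the whole Cantor set $C_m$ (not just the root's center) lies in $W_m$ and the union $\bigcup_m C_m$ meets every basic open set; and in the forest case the diameter condition should be tied to the stage number rather than to tree depth, so that diameters still tend to zero along every branch of every tree.
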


Now we have the following characterization of dense mean Li-Yorke extremely chaos for a sequence of bounded linear operators.

\begin{thm}\label{thm:denseLi-Yorke-delta-chaos}
    Let $(T_i)_{i=1}^{\infty}$ be a sequence of bounded linear operators from a separable Banach space $X$ to a normed linear space $Y$. 
    Then the following assertions are equivalent:
    \begin{enumerate}
        \item \label{Thm: densely mean LY delta-chaotic}
             $(T_i)_i$ is densely mean Li-Yorke $\delta$-chaotic for some $\delta>0$;
        \item \label{Thm: densely mean LY extremely chaotic}
             $(T_i)_i$ is densely mean Li-Yorke extremely chaotic;
        \item \label{Thm: densely set of mean LY ex sc pairs}
              $(T_i)_i$ has a dense set of mean Li-Yorke extreme pairs;
        \item \label{Thm: residual set of mean LY ex sc pairs}
               $(T_i)_i$ has a residual set of mean Li-Yorke extreme pairs;
        \item \label{Thm: dense set of ab mean irregualr vectors}
              $(T_i)_i$ has a dense set of absolutely mean irregular vectors;
        \item \label{Thm: residual set of ab mean irregualr vectors}
               $(T_i)_i$ has a residual set of absolutely mean irregular vectors;
        \item \label{Thm: MProx dense and mean sensitive}
              the mean proximal relation of $(T_i)_i$ is dense in $X\times X$ and $(T_i)_i$ is mean sensitive.
    \end{enumerate}
\end{thm}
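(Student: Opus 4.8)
The plan is to prove the equivalences by establishing the cycle
$(\ref{Thm: residual set of mean LY ex sc pairs})\Rightarrow(\ref{Thm: densely mean LY extremely chaotic})\Rightarrow(\ref{Thm: densely mean LY delta-chaotic})\Rightarrow(\ref{Thm: MProx dense and mean sensitive})\Rightarrow(\ref{Thm: residual set of mean LY ex sc pairs})$
and then attaching $(\ref{Thm: densely set of mean LY ex sc pairs})$, $(\ref{Thm: dense set of ab mean irregualr vectors})$, $(\ref{Thm: residual set of ab mean irregualr vectors})$ by short side-implications. The guiding principle is that, by linearity, $(x,y)$ is a mean Li-Yorke extreme pair exactly when $u=x-y$ is an absolutely mean irregular vector, and that the two relevant sets are $G_\delta$: the extreme-pair set is the intersection of the mean proximal relation (Lemma~\ref{mean proximal}(\ref{Lem: MProx G-delta})) with the set of Lemma~\ref{mean proximal}(\ref{Lem: infty}), and the irregular-vector set is the intersection of $\text{MProx}(T_i,\mathbf{0})$ (Lemma~\ref{mean proximal}(\ref{Lem: MProx cell G-delta})) with the $x=\mathbf{0}$ instance of Lemma~\ref{mean proximal}(\ref{Lem: cell-infty})). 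Since $X$ and $X\times X$ are Baire spaces, a dense $G_\delta$ set is residual, so for these two sets ``dense'' and ``residual'' coincide; this at once gives $(\ref{Thm: densely set of mean LY ex sc pairs})\Leftrightarrow(\ref{Thm: residual set of mean LY ex sc pairs})$ and $(\ref{Thm: dense set of ab mean irregualr vectors})\Leftrightarrow(\ref{Thm: residual set of ab mean irregualr vectors})$. The continuous surjection $(x,y)\mapsto x-y$ then links pairs to vectors: it sends a dense set of extreme pairs onto a dense set of irregular vectors, and conversely a dense set $V$ of irregular vectors produces the dense family $\{(x,x-u):x\in X,\ u\in V\}$ of extreme pairs, so $(\ref{Thm: densely set of mean LY ex sc pairs})\Leftrightarrow(\ref{Thm: dense set of ab mean irregualr vectors})$.

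The routine implications of the cycle are as follows. For $(\ref{Thm: densely mean LY extremely chaotic})\Rightarrow(\ref{Thm: densely mean LY delta-chaotic})$, an extreme pair has $\limsup\geq\delta$ for every $\delta$, so a dense extremely scrambled set is dense $1$-scrambled. For $(\ref{Thm: densely mean LY extremely chaotic})\Rightarrow(\ref{Thm: densely set of mean LY ex sc pairs})$, if $K$ is a dense extremely scrambled set then $(K\times K)\setminus\Delta$ is dense in $X\times X$ and consists of extreme pairs (using that $X$ has no isolated points, so $\Delta$ is nowhere dense). The key topological step is $(\ref{Thm: residual set of mean LY ex sc pairs})\Rightarrow(\ref{Thm: densely mean LY extremely chaotic})$: a residual extreme-pair set is a dense $G_\delta$, so Theorem~\ref{Mycielski Theorem} yields a dense $\sigma$-Cantor set $K$ all of whose distinct pairs are extreme, i.e.\ a dense uncountable extremely scrambled set.

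The two implications carrying the real content use the dichotomy machinery of Section~\ref{sec:me&ms}. For $(\ref{Thm: MProx dense and mean sensitive})\Rightarrow(\ref{Thm: residual set of mean LY ex sc pairs})$, the dense mean proximal relation is residual (dense $G_\delta$), while mean sensitivity makes the set $\{(x,y):\limsup_{n}\frac1n\sum_{i=1}^{n}\Vert T_i(x-y)\Vert=\infty\}$ residual by the equivalence $(\ref{thm1: mean sensitive})\Leftrightarrow(\ref{thm4: residual points in X*X})$ of Theorem~\ref{thm:mean-sensitive}; their intersection is precisely the extreme-pair set, hence residual. For $(\ref{Thm: densely mean LY delta-chaotic})\Rightarrow(\ref{Thm: MProx dense and mean sensitive})$ the density of the mean proximal relation is immediate from a dense $\delta$-scrambled set $K$ via $(K\times K)\setminus\Delta$, so the only remaining issue is to extract mean sensitivity.

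That last extraction is the main obstacle: I must upgrade the uniform lower bound $\limsup\geq\delta$ on difference vectors to the existence of a single vector with $\limsup=\infty$. I would argue by contraposition. If $(T_i)_i$ were not mean sensitive, then by the equivalence $(\ref{thm1: mean sensitive})\Leftrightarrow(\ref{thm2: x unbounded})$ of Theorem~\ref{thm:mean-sensitive} every $x$ satisfies $\limsup_{n}\frac1n\sum_{i=1}^{n}\Vert T_ix\Vert<\infty$, so by Theorem~\ref{thm:mean-UBT} the sequence is absolutely Ces\`aro bounded with some constant $C$, giving $\limsup_{n}\frac1n\sum_{i=1}^{n}\Vert T_iu\Vert\le C\Vert u\Vert$ for all $u$. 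But a dense $\delta$-scrambled set $K$ contains, for a fixed $x_0\in K$, points $y\in K$ with $y\neq x_0$ and $\Vert x_0-y\Vert$ arbitrarily small (again since $X$ has no isolated points); the difference $u=x_0-y$ then satisfies both $\limsup\geq\delta$ and $\limsup\le C\Vert u\Vert<\delta$ as soon as $\Vert u\Vert<\delta/C$, a contradiction. Hence $(T_i)_i$ is mean sensitive, which finishes $(\ref{Thm: densely mean LY delta-chaotic})\Rightarrow(\ref{Thm: MProx dense and mean sensitive})$ and closes the cycle.
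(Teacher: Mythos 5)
Your proof is correct, but it is organized differently from the paper's. The paper proves one long cycle
$(\ref{Thm: residual set of mean LY ex sc pairs})\Rightarrow(\ref{Thm: densely set of mean LY ex sc pairs})\Rightarrow(\ref{Thm: densely mean LY extremely chaotic})\Rightarrow(\ref{Thm: densely mean LY delta-chaotic})\Rightarrow(\ref{Thm: MProx dense and mean sensitive})\Rightarrow(\ref{Thm: residual set of ab mean irregualr vectors})\Rightarrow(\ref{Thm: dense set of ab mean irregualr vectors})\Rightarrow(\ref{Thm: residual set of mean LY ex sc pairs})$,
whereas you run the shorter cycle through $(\ref{Thm: residual set of mean LY ex sc pairs})$, $(\ref{Thm: densely mean LY extremely chaotic})$, $(\ref{Thm: densely mean LY delta-chaotic})$, $(\ref{Thm: MProx dense and mean sensitive})$ and attach the remaining three items by structural equivalences: since the set of \emph{all} extreme pairs and the set of \emph{all} absolutely mean irregular vectors are $G_\delta$ (Lemma~\ref{mean proximal}), dense and residual coincide for these two sets in the Baire spaces $X\times X$ and $X$, and the continuous surjection $(x,y)\mapsto x-y$ (together with the section $u\mapsto(x,x-u)$) transports one set onto the other. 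The paper obtains these same links only implicitly, by routing the pairs-to-vectors translation through Theorem~\ref{thm:mean-sensitive} in its steps $(\ref{Thm: MProx dense and mean sensitive})\Rightarrow(\ref{Thm: residual set of ab mean irregualr vectors})$ and $(\ref{Thm: dense set of ab mean irregualr vectors})\Rightarrow(\ref{Thm: residual set of mean LY ex sc pairs})$. The one step where your argument genuinely differs in substance is the extraction of mean sensitivity in $(\ref{Thm: densely mean LY delta-chaotic})\Rightarrow(\ref{Thm: MProx dense and mean sensitive})$: the paper argues directly, picking two distinct points $y_1\neq y_2$ of the dense $\delta$-scrambled set near a given $x$ and using the triangle inequality to produce the sensitivity constant $\delta/2$; you argue by contraposition through the dichotomy machinery, noting that failure of mean sensitivity forces (via Theorems~\ref{thm:mean-sensitive} and~\ref{thm:mean-UBT}) absolute Ces\`aro boundedness, so that $\limsup_n\frac1n\sum_{i=1}^n\Vert T_iu\Vert\le C\Vert u\Vert$, contradicting the uniform lower bound $\delta$ on arbitrarily short difference vectors $u=x_0-y$ coming from the scrambled set. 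Both are sound: the paper's version is self-contained and exhibits an explicit sensitivity constant, while yours reuses the Section~\ref{sec:me&ms} dichotomy, avoids the $\varepsilon$-bookkeeping, and incidentally sidesteps the strict-versus-nonstrict inequality in the definition of mean sensitivity. The remaining tools (Mycielski's theorem for producing the dense scrambled set, Baire category plus Theorem~\ref{thm:mean-sensitive} for the residual set of extreme pairs) are used exactly as in the paper.
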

\begin{proof}
      $ (\ref{Thm: residual set of mean LY ex sc pairs})  \Rightarrow (\ref{Thm: densely set of mean LY ex sc pairs})$, 
      $(\ref{Thm: densely mean LY extremely chaotic}) \Rightarrow (\ref{Thm: densely mean LY delta-chaotic})$ 
      and 
      $(\ref{Thm: residual set of ab mean irregualr vectors})\Rightarrow (\ref{Thm: dense set of ab mean irregualr vectors} )$. 
      Those are clear.
 
    $(\ref{Thm: densely set of mean LY ex sc pairs}) \Rightarrow (\ref{Thm: densely mean LY extremely chaotic})$. 
    Let $A$ be a dense set of mean Li-Yorke extreme pairs. Then $A$ can be expressed as the intersection of the mean proximal relation $\text{MProx}(T_i)$
    and the set
    \[
        \biggl\{(x,y)\in X\times X: \limsup_{n\to \infty} \frac{1}{n} \sum_{i=1}^n \Vert T_ix-T_iy \Vert =\infty \biggr\}.
    \]
    By Lemma \ref{mean proximal}, the set $A$ is a dense $G_{\delta}$ subset of $X\times X$.
    By Theorem \ref{Mycielski Theorem}, there exists a dense $\sigma$-Cantor subset $K$ of $X$ such that for every distinct points $x, y\in K$, $(x, y)\in A$.
    This implies that $K$ is a densely uncountable mean Li-Yorke extremely scrambled set,
    thus $(T_i)_i$ is densely mean Li-Yorke extremely  chaotic.

    $(\ref{Thm: densely mean LY delta-chaotic}) \Rightarrow (\ref{Thm: MProx dense and mean sensitive})$. Let $B$ be a densely uncountable mean Li-Yorke $\delta$-scrambled set for some $\delta>0$.
    Since $B\times B \subset \text{MProx}(T_i)$, the mean proximal relation of $(T_i)_i$ is dense in $X\times X$.

    Now put $\delta_0=\frac{\delta}{2}$. For every $x\in X$ and $\varepsilon>0$,
    as $B$ is dense and uncountable, choose $y_1 \neq y_2\in B$ with $\Vert x-y_1 \Vert <\varepsilon$ and
    $\Vert x-y_2 \Vert <\varepsilon$,
    and we have
    \[
        \limsup_{n\to \infty} \frac{1}{n} \sum_{i=1}^{n} \Vert T_iy_1-T_iy_2 \Vert \geq \delta.
    \]
    By the triangle inequality, one has either
    \[
        \limsup_{n\to \infty} \frac{1}{n} \sum_{i=1}^{n} \Vert T_ix-T_iy_1 \Vert \geq \delta_0
    \]
    or
    \[
        \limsup_{n\to \infty} \frac{1}{n} \sum_{i=1}^{n} \Vert T_ix-T_iy_2 \Vert \geq \delta_0.
    \]
    This implies that $(T_i)_i$ is mean sensitive.

    $(\ref{Thm: MProx dense and mean sensitive}) \Rightarrow (\ref{Thm: residual set of ab mean irregualr vectors})$. 
    The set of absolutely mean irregular vectors is the following intersection
    \[
        \text{MProx}(T_i, {\bf 0}) \cap \biggl\{x\in X: \limsup_{n\to \infty} \frac{1}{n} \sum_{i=1}^{n} \Vert T_ix \Vert =\infty\biggr\}.
    \]
    By Lemma \ref{mean proximal}, $\text{MProx}(T_i, {\bf 0})$ is a dense $G_{\delta}$ subset of $X$.
    Since $(T_i)_i$ is mean sensitive, by Theorem \ref{thm:mean-sensitive}, the set
    \[
        \biggl\{x\in X: \limsup_{n\to \infty} \frac{1}{n} \sum_{i=1}^{n} \Vert T_ix \Vert =\infty\biggr\}
    \]
    is also a dense $G_{\delta}$ subset of $X$. Then the set of absolutely mean irregular vectors is residual.

    $(\ref{Thm: dense set of ab mean irregualr vectors}) \Rightarrow (\ref{Thm: residual set of mean LY ex sc pairs})$.
    Let $C$ be a dense set of absolutely mean irregular vectors.
    Then $C\subset \text{MProx}(T_i, {\bf 0})$.
    By Lemma~\ref{lem:mprox-masym},  $\text{MProx}(T_i)$ is residual in $X\times X$.
    On the other hand every vector $x\in C$ satisfies
    \[
        \limsup_{n\to \infty} \frac{1}{n} \sum_{i=1}^{n} \Vert T_ix \Vert =\infty,
    \]
    by Theorem~\ref{thm:mean-sensitive},
    $(T_i)_i$ is mean sensitive and
    \[
        \biggl\{(x, y)\in X\times X:  \limsup_{n\to \infty} \frac{1}{n} \sum_{i=1}^{n} \Vert T_ix-T_iy \Vert =\infty\biggr\}
    \]
    is residual. Then the collection of  mean Li-Yorke extreme pairs is residual in $X\times X$.
\end{proof}

\begin{rem}\label{rem:separability-of-X}
In the proof of Theorem~\ref{thm:denseLi-Yorke-delta-chaos}, the separability of $X$ is only used in  $(\ref{Thm: densely set of mean LY ex sc pairs}) \Rightarrow (\ref{Thm: densely mean LY extremely chaotic})$ because  we want to apply the Theorem~\ref{Mycielski Theorem}.
Then 
(\ref{Thm: densely set of mean LY ex sc pairs})-(\ref{Thm: MProx dense and mean sensitive}) of Theorem~\ref{thm:denseLi-Yorke-delta-chaos}
are also equivalent in the condition that $X$ is a Banach space.
\end{rem}

\begin{exam}\label{exam:Ti-R}
    Let $X=\mathbb{R}$.
    \begin{enumerate}
        \item \label{Ex: 1}
        There exists a sequence $(T_i)_{i=1}^{\infty}$ of operators on $X$ such that for any non-zero $x\in X$,
              \[
                  \liminf_{n\to\infty}
                  \frac{1}{n}\sum_{i=1}^{n}\Vert T_i x\Vert=0 \text { and }
                  \limsup_{n\to\infty}
                  \frac{1}{n}\sum_{i=1}^{n}\Vert T_i x\Vert=\Vert x\Vert.
              \]

              Define $I\colon X\to X$, $x\mapsto x$ and $O\colon X\to X$, $x\mapsto 0$.
              For each $n\in\mathbb{N}$, define $a_n=2n!-1$ and $b_n=(n+1)!+n!-1$.
              Then $a_n<b_n<a_{n+1}$.
              For each $i\in\mathbb{N}$, define
              \[
                  T_i=\begin{cases}
                      O,  & i\in [a_n,b_n) \text{ for some } n      \\
                      2I, & i\in [b_n,a_{n+1}) \text{ for some } n.
                  \end{cases}
              \]
              Fix $x\in \mathbb{R}\setminus\{0\}$.
              For each $k\in\mathbb{N}$, let
              \[
                  f(k)= \frac{1}{k}\sum_{i=1}^k \Vert T_i x \Vert.
              \]
              It is easy to see that $f(k)$ is decreasing on each interval $[a_n,b_n)$,
              and increasing on each interval $[b_n,a_{n+1})$.
              Then
              \begin{align*}
                  \liminf_{k\to\infty} f(k) & = \lim_{n\to\infty} f(b_n-1)=  \lim_{n\to\infty}\frac{1}{b_n-1}\sum_{i=1}^{b_n-1} \Vert T_i x\Vert \\
               & = \lim_{n\to\infty}\frac{1}{b_n-1} \sum_{i=2}^n 2(a_i-b_{i-1})|x| \\
               &= \lim_{n\to\infty} \frac{2|x| (n!-1) }{ (n+1)!+ n!-1 -1} =0,
              \end{align*}
              and
              \begin{align*}
                  \limsup_{k\to\infty} f(k) & = \lim_{n\to\infty} f(a_{n+1}-1)
                  =  \lim_{n\to\infty}\frac{1}{a_{n+1}-1}\sum_{i=1}^{a_{n+1}-1} \Vert T_i x\Vert\\
                 & =\lim_{n\to\infty}\frac{1}{a_{n+1}-1} \sum_{i=2}^{n+1} 2(a_i-b_{i-1})|x| \\
                 & = \lim_{n\to\infty} \frac{2|x|  ( (n+1)!-1  ) }{ 2(n+1)!-1-1 }  =|x|.
              \end{align*}

        \item \label{Ex: 2}
         There exists a sequence $(T_i)_{i=1}^{\infty}$ of operators on $X$ such that for any non-zero $x\in X$,
              \[
                  \liminf_{n\to\infty}
                  \frac{1}{n}\sum_{i=1}^{n}\Vert T_i x\Vert=0  \ \text {and} \
                  \limsup_{n\to\infty}
                  \frac{1}{n}\sum_{i=1}^{n}\Vert T_i x\Vert=\infty.
              \]
              Let $c_1=1$, and choose $d_1, c_n, d_n, n\ge 2$ following the rules below:
              \[
                  d_n=c_n+n^3c_n, \ \text{and} \ c_{n+1}=d_n+n.
              \]
              For each $i\in\mathbb{N}$, define
              \[
                  T_i=\begin{cases}
                      O,         & i\in [c_n,d_n)\text{ for some } n;     \\
                      c_{n+1} I, & i\in [d_n,c_{n+1})\text{ for some } n.
                  \end{cases}
              \]
              By the same idea of $(\ref{Ex: 1})$, for every $x\in\mathbb{R}\setminus\{0\}$,
              \begin{align*}
                  \liminf_{n\to\infty}
                  \frac{1}{n}\sum_{i=1}^{n}\Vert T_i x\Vert
                   & = \lim_{n\to\infty} \frac{1}{d_n-1}
                  \sum_{i=1}^{d_n-1 }\Vert T_i x\Vert \\
                   & =\lim_{n\to\infty} \frac{|x| \biggr( c_2+2c_3+\cdots+(n-1)c_n   \biggr)}{d_n-1} \\
                   & \le \lim_{n\to\infty}   \frac{ n(n-1)c_n |x|}{c_n+n^3c_n-1} =0,
              \end{align*}
              and
              \begin{align*}
                  \limsup_{n\to\infty}
                  \frac{1}{n}\sum_{i=1}^{n}\Vert T_i x\Vert & = \lim_{n\to\infty} \frac{1}{c_{n+1}-1}\sum_{i=1}^{c_{n+1}-1} \Vert T_i x\Vert \\
                  & = \lim_{n\to\infty}  \frac{|x| \biggr( c_2+2c_3+\cdots+nc_{n+1}   \biggr)}{c_{n+1}-1} \\
                & \ge  \lim_{n\to\infty} n|x|=\infty.
              \end{align*}
    \end{enumerate}
\end{exam}

\begin{rem}
    Example~\ref{exam:Ti-R} $(\ref{Ex: 1})$ shows that there exists a sequence of operators on a finite dimensional space such that every non-zero vector is absolutely mean semi-irregular vector, but there is no absolutely mean irregular vectors.

    Example~\ref{exam:Ti-R} $(\ref{Ex: 2})$ shows that there exists a sequence of operators on a finite dimensional space such that every non-zero vector is absolutely mean irregular, and we mention here that in \cite{BBP2020} the authors constructed a forward shift satisfies this property.
\end{rem}

\begin{defn}
    Let $(T_i)_{i=1}^{\infty}$ be a sequence of bounded linear operators from a separable Banach space $X$ to a normed linear space $Y$. A vector subspace $Z$ of $X$ is called an \emph{absolutely mean irregular manifold} for $(T_i)_i$ if every non-zero vector $z\in Z$ is absolutely mean irregular.
\end{defn}

We have the following sufficient condition for the existence of dense irregular manifold for a sequence of operators, for the single operator case, refer to \cite{BBP2020}*{Theorem 29} and \cite{JL2022}*{Theorem 3.36} for Fr\'echet spaces.

\begin{thm}\label{thm:dense-mean-irregular-manifold}
    Let $(T_i)_{i=1}^{\infty}$ be a sequence of bounded linear operators from a separable Banach space $X$ to a normed linear space $Y$.
    If the  mean asymptotic relation of $(T_i)_i$ is dense in $X\times X$ and $(T_i)_i$ is mean sensitive, then $(T_i)_i$ has a dense absolutely mean irregular manifold.
\end{thm}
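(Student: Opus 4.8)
The plan is to deduce the statement from an explicit inductive construction of a linearly independent sequence $(z_k)_{k\ge1}$ whose linear span $Z$ is dense and every nonzero finite combination of which is absolutely mean irregular. For brevity write $S_n(x)=\frac1n\sum_{i=1}^n\|T_ix\|$. First I would collect the raw material. By Lemma~\ref{observations 1}\,(\ref{MAsym dense}) the density of the mean asymptotic relation is equivalent to $A:=\mathrm{MAsym}(T_i,\mathbf 0)$ being dense, and by the triangle inequality $A$ is a linear subspace on which $S_n(\cdot)\to 0$; hence $|S_n(x+a)-S_n(x)|\le S_n(a)\to0$ for every $x$ and $a\in A$, so adding a vector of $A$ changes neither $\liminf_nS_n$ nor $\limsup_nS_n$. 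Moreover, since density of $A$ forces density of the mean proximal relation, condition~(\ref{Thm: MProx dense and mean sensitive}) of Theorem~\ref{thm:denseLi-Yorke-delta-chaos} holds, so by condition~(\ref{Thm: residual set of ab mean irregualr vectors}) the absolutely mean irregular vectors already form a residual subset of $X$. Finally, mean sensitivity with Theorem~\ref{thm:mean-sensitive} makes $\{x:\limsup_nS_n(x)=\infty\}$ residual, and $S_n(\lambda x)=|\lambda|S_n(x)$ yields the amplification: for every $\varepsilon>0$ and $M>0$ there is $w$ with $\|w\|<\varepsilon$ and $S_N(w)>M$ for some arbitrarily large $N$.

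The task then reduces to organizing a residual supply of irregular vectors into a dense subspace; note that such a subspace must satisfy $Z\cap A=\{0\}$, since a nonzero element of $A$ is mean asymptotic, hence not irregular. I would build each generator as $z_k=a_k+\sum_{l\in L_k}s_l$, where $a_k\in A$ is an asymptotic part used to steer the partial span and the $s_l$ are norm-small spike vectors drawn from the amplified mean sensitivity, with $\sum_l\|s_l\|<\infty$ so that each $z_k$ is well defined. The construction proceeds along a single increasing sequence of checkpoints $n_1<n_2<\cdots$: at block $l$ I add one fresh spike $s_l$, assigned to generator $k(l)$, chosen with $S_{n_l}(s_l)$ very large (its own spike time being taken as $n_l$), where $k(l)$ cycles through all indices infinitely often and the heights tend to $\infty$. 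The key to ruling out cancellation is \emph{dilution}: I choose $n_l$ so large that the total accumulated mass $\sum_{i\le n_l}\|T_i(\cdot)\|$ of all previously placed vectors is negligible after division by $n_l$, and I take each later spike $s_{l'}$ ($l'>l$) with $\|s_{l'}\|$ small enough that, via the fixed-time bound $S_{n_l}(s_{l'})\le C_{n_l}\|s_{l'}\|$, it is negligible at every earlier checkpoint. Then for any nonzero combination $z=\sum_{j\in F}c_jz_j$ and any block $l$ with $k(l)=j_0\in F$, one gets $S_{n_l}(z)\gtrsim|c_{j_0}|S_{n_l}(s_l)-o(1)$, so $\limsup_nS_n(z)=\infty$.

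Density is arranged by the asymptotic parts: at step $k$ I fix a dense sequence $(w_k)$ in $X$ and select $a_k\in A$ so that some combination of $z_1,\dots,z_k$ lies within $1/k$ of $w_k$, which is possible because $A$ is dense and the spike corrections are summably small; this gives $\overline Z=X$. For the condition $\liminf_nS_n(z)=0$ I would, between consecutive spike blocks, insert long quiet stretches during which no new spike is added, using dilution and the decay $S_n(a_k)\to0$ to drive the Cesàro average of each already-formed combination back toward $0$ before the next spike is placed, and I would select the spike vectors themselves from the residual set of irregular vectors so that they too return near $0$.

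The main obstacle is precisely this last coordination: making the quiet checkpoints simultaneous for \emph{all} finite combinations while preserving the non-cancellation at the spike checkpoints and the density requirement. Since $\liminf_nS_n=0$ for each component vector separately does not give a common sequence of times at which a whole finite combination is small, one must interleave the two families of checkpoints by a diagonalization over the countably many pairs (finite index set, approximation scale), at each stage simultaneously (i) placing a fresh spike late and tall enough to dominate all earlier mass after dilution, (ii) shrinking all future spikes to stay negligible at the current checkpoint, and (iii) extending a quiet zone long enough to restore a small average for every combination already under consideration. Carrying out this bookkeeping consistently is the crux of the argument, and it parallels the single-operator constructions of \cite{BBP2020}*{Theorem 29} and \cite{JL2022}*{Theorem 3.36}, which I would adapt to the present sequence setting.
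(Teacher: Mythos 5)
Your proposal assembles the right raw material (density of $\mathrm{MAsym}(T_i,\mathbf{0})$ via Lemma~\ref{observations 1}, residuality of $D=\{x\colon \limsup_n \frac1n\sum_{i=1}^n\Vert T_ix\Vert=\infty\}$ via Theorem~\ref{thm:mean-sensitive}), but it stops exactly at the decisive point, and the one concrete mechanism you do offer does not work. The dilution step is unjustified: for a fixed, previously placed spike vector $s_{l''}$, taking the next checkpoint $n_l$ large does \emph{not} make $\frac{1}{n_l}\sum_{i=1}^{n_l}\Vert T_i s_{l''}\Vert$ small. That average has $\limsup=\infty$ (the spikes are irregular vectors by your own choice), and its $\liminf$ is $0$ only along a sequence of times depending on $s_{l''}$; the sum $\sum_{i\le n_l}\Vert T_i s_{l''}\Vert$ keeps growing after the spike and is not ``accumulated mass'' frozen at the earlier checkpoint. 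So both the non-cancellation estimate at spike times and the quiet stretches needed for $\liminf_n S_n(z)=0$ require a \emph{common} time sequence at which finitely many irregular vectors are simultaneously small in the mean --- and each vector having $\liminf=0$ separately gives no such common sequence. You name this simultaneity problem yourself and defer it to ``bookkeeping'' and to the single-operator literature; since this coordination is precisely the content of the theorem, the proposal has a genuine gap rather than a complete argument.

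The paper closes this gap with a Baire-category device that your outline lacks. The key Claim is that for \emph{any} increasing sequence $(s_k)_k$ of times, the set $P(s_k)=\bigl\{x\in X\colon \liminf_k \frac{1}{s_k}\sum_{i=1}^{s_k}\Vert T_ix\Vert=0\bigr\}$ is a dense $G_\delta$ subset of $X$ (dense because it contains the dense set $\mathrm{MAsym}(T_i,\mathbf{0})$, and $G_\delta$ by a routine formula). One then builds the generators inductively: $x_{n+1}$ is chosen close to the $(n+1)$-st element of a fixed dense sequence inside the dense $G_\delta$ set $\bigcap_{j=1}^{n}P(s_k^{(n,j)})\cap P(t_k^{(n)})\cap D$, where $(s_k^{(n,j)})_k$ and $(t_k^{(n)})_k$ are the small-time and blow-up-time sequences recorded so far, and afterwards one passes to subsequences, using the fact that a Ces\`aro limit $0$ along a sequence persists along every subsequence. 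The nesting hands you exactly the two common sequences you were missing: one along which all generators are simultaneously small in the mean (so every nonzero combination has $\liminf=0$), and, for the least index $l'$ with nonzero coefficient, a subsequence of $t_k^{(l')}$ along which $x_{l'}$ blows up while all later generators tend to $0$ in the mean (so the $\limsup$ is $\infty$ and no cancellation can occur). If you want to salvage your explicit spike construction, this is the mechanism to import: choose each new vector inside the intersection of the $P$-sets of all previously recorded time sequences, rather than trying to control old vectors at new times by dilution.
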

\begin{proof}
    Since $(T_i)_i$ is mean sensitive, by Theorem \ref{thm:mean-sensitive}, the set
    \[
        D=\biggl\{ x\in X\colon \limsup_{n\to{\infty}}\frac{1}{n}\sum_{i=1}^{n}\Vert T_i x\Vert=\infty \biggr \}
    \]
    is residual in $X$.
    We first show the following claim.
    \begin{claim} \label{$P(s_k)$ is residual}
        For any increasing sequence $(s_k)_k$ in $\mathbb{N}$,
        the set
        \[
            P(s_k)=\biggl\{x\in X: \liminf_{k\to \infty} \frac{1}{s_k} \sum_{i=1}^{s_k} \Vert T_i x \Vert=0 \biggr\}
        \]
        is a dense $G_{\delta}$ subset of $X$.
    \end{claim}
    \begin{proof}[Proof of the Claim]
        By Lemma \ref{observations 1}, $\text{MAsym}(T_i, {\bf 0})$ is dense in $X$.
        As $\text{MAsym}(T_i, {\bf 0}) \subset P(s_k)$, $P(s_k)$ is dense in $X$.
        Note that
        \[
            P(s_k)=\bigcap_{n=1}^{\infty} \biggl\{x\in X: \exists k>n  \ \text{s.t.} \  \frac{1}{s_k} \sum_{i=1}^{s_k} \Vert T_i x \Vert<\frac{1}{n}\biggr\},
        \]
        which implies that $P(s_k)$ is a dense $G_{\delta}$ subset of $X$.
    \end{proof}

    Fix a dense sequence $(z_m)_m$ in $X$.
    We will construct inductively a sequence $(x_m)_m \subset X$ such that
    $\Vert x_m-z_m \Vert <\frac{1}{m}$ and $\text{span} \{x_m: m\in \mathbb{N}\}$ is an absolutely mean irregular manifold.

    By the completeness of $X$, $X_1:=P(s_k)\cap D$ is a dense $G_{\delta}$ subset of $X$.
    We pick $x_1\in X_1$ with $\Vert x_1-z_1 \Vert <1$.
    Then there exist two increasing sequences $(s_k^{(1,1)})_k\subset (s_k)_k$
    and $(t_k^{(1)})_k$ in $\mathbb{N}$ such that
    \[
        \lim_{k\to \infty} \frac{1}{s_k^{(1,1)} } \sum_{i=1}^{s_k^{(1,1)}} \Vert T_{i}x_1 \Vert =0 \
        \text{and} \
        \lim_{k\to \infty} \frac{1}{t_k^{(1)}} \sum_{i=1}^{t_k^{(1)}} \Vert T_{i}x_1 \Vert =\infty.
    \]
    It is clear that $\mathbb{K}x_1$ is an absolutely mean irregular manifold.
    Assume that $x_m\in X$, sequences $(s_k^{(m,j)})_k$ and $(t_k^{(m)})_k$ in $\mathbb{N}$ have been construted for $m=1, \ldots,n$ and $j=1,\ldots, m$ such that
    \begin{enumerate}
        \item $\Vert x_m-z_m \Vert <\frac{1}{m}$ for $m=1, \ldots n$;
        \item for $m=2,\ldots, n$ and $j=1,\ldots, m-1$,
              $(s_k^{(m,j)})_k$ is a subsequence of $(s_k^{(m-1,j)})_k$, and $(s_k^{(m,i)})_k$ is a subsequence of $(t_k^{(m-1)})_k$;
        \item for $m=1, \ldots, n$ and $j=1, \ldots, m$,
              \[
                  \lim_{k\to \infty} \frac{1}{s_k^{(m,j)}} \sum_{i=1}^{ s_k^{(m,j)} } \Vert T_{i} x_m \Vert =0 \
                  \text{and} \
                  \lim_{k\to \infty} \frac{1}{t_k^{(m)}} \sum_{i=1}^{ t_k^{(i)} } \Vert T_{i} x_m \Vert =\infty;
              \]
        \item $\text{span} \{x_1, x_2, \ldots, x_n\} $ is an absolutely mean irregular manifold.
    \end{enumerate}
    By Claim \ref{$P(s_k)$ is residual},
    \[
        X_{n+1}:= \bigcap_{j=1}^n P(s_k^{(n,j)}) \cap P(t_k^{(n)}) \cap D
    \]
    is also a dense $G_{\delta}$ subset of $X$.
    Pick $x_{n+1}\in X_{n+1}$ with $\Vert x_{n+1} -z_{n+1} \Vert <\frac{1}{n+1}$.
    For $j=1, \ldots, n$, there exist a subsequence $(s_k^{(n+1, j)})_k$ of $(s_k^{(n, j)})_k$ such that
    \[
        \lim_{k\to \infty} \frac{1}{s_k^{(n+1, j)}} \sum_{i=1}^{s_k^{(n+1, j)}} \Vert T_{i} x_{n+1} \Vert =0,
    \]
    a subsequence $(s_k^{(n+1, n+1)})_k$ of $(t_k^{(n)})_k$ such that
    \[
        \lim_{k\to \infty} \frac{1}{s_k^{(n+1, n+1)}} \sum_{i=1}^{s_k^{(n+1, n+1)}} \Vert T_{i} x_{n+1} \Vert =0,
    \]
    and an increasing sequence $(t_k^{(n+1)})_k$ in $\mathbb{N}$ such that
    \[
        \lim_{k\to \infty} \frac{1}{t_k^{(n+1)}} \sum_{i=1}^{t_k^{(n+1)}} \Vert T_{i} x_{n+1} \Vert =\infty.
    \]
    For any $\sum_{l=1}^{n+1} \alpha_lx_l \in \text{span} \{x_1, \ldots, x_n, x_{n+1}\} \backslash \{{\bf 0}\}$,
    \[
        \liminf_{k\to \infty} \frac{1}{s_k^{(n+1,1)}} \sum_{i=1}^{s_k^{(n+1,1)} } \Vert T_{i}  (\sum_{l=1}^{n+1} \alpha_lx_l )\Vert \le \sum_{l=1}^{n+1} |\alpha_l|  \lim_{k\to \infty} \frac{1}{s_k^{(n+1,1)}} \sum_{i=1}^{s_k^{(n+1,1)} } \Vert T_{i}  x_l \Vert =0.
    \]
    Let $l'=\min \{l\in \{1, \ldots, n+1\}: \alpha_l \neq 0\}$.
    If $l'<n+1$, then
    \begin{align*}
            & \limsup_{k\to \infty} \frac{1}{s_k^{(n+1,l'+1)}} \sum_{i=1}^{s_k^{(n+1,l'+1)} } \Vert T_{i} (\sum_{l=l'}^{n+1} \alpha_lx_l) \Vert  \\
        \ge &
        |\alpha_{l'}| \lim_{k\to \infty}  \frac{1}{s_k^{(n+1, l'+1)}} \sum_{i=1}^{s_k^{(n+1,l'+1)} } \Vert T_{i} x_{l'} \Vert
        -\sum_{l=l'+1}^{n+1} |\alpha_l| \limsup_{k\to \infty}\frac{1}{s_k^{(n+1,l'+1)}} \sum_{i=1}^{s_k^{(n+1,l'+1)} } \Vert T_{i} x_l \Vert \\
        =   & \infty-0=\infty.
    \end{align*}
    If $l'=n+1$, then by the construction one has
    \[
        \lim_{k\to \infty} \frac{1}{t_k^{(n+1)}} \sum_{i=1}^{t_k^{(n+1)}} \Vert T_{i} x_{n+1} \Vert =\infty.
    \]
    So $\text{span}\{x_1, \ldots, x_n, x_{n+1}\}$ is an absolutely mean irregular manifold.
    By induction, we obtain that the subspace
    $\text{span}\{x_m: m\in \mathbb{N}\}$ is a dense absolutely mean irregular manifold.
\end{proof}

Using Theorem \ref{thm:dense-mean-irregular-manifold}, we have the following characterization of mean Li-Yorke chaos for a sequence of multiples of iterations of the backward shift on $\ell^1(\mathbb{N})$.

\begin{prop}
    Let $X=\ell^1(\mathbb{N})$ and $B\colon X\to X$, $(x_i)_{i=1}^\infty \mapsto (x_i)_{i=2}^\infty$.
    For each $i\in\mathbb{N}$, let $T_i=\lambda_i B^i$, $\lambda_i\in \mathbb{R}$.
    Then the following assertions are equivalent:
    \begin{enumerate}
        \item \label{prop1: mean LY chaotic}
             $(T_i)_i$ is mean Li-Yorke chaotic;
        \item \label{prop2: manifold}
             $(T_i)_i$ has a dense absolutely mean irregular manifold;
        \item \label{prop3: lambda unbounded}
               $\displaystyle
                  \limsup_{n\to\infty} \frac{1}{n} \sum_{i=1}^n |\lambda_i|=\infty$.
    \end{enumerate}
\end{prop}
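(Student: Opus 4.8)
The plan is to prove the cycle of implications $(\ref{prop2: manifold}) \Rightarrow (\ref{prop1: mean LY chaotic}) \Rightarrow (\ref{prop3: lambda unbounded}) \Rightarrow (\ref{prop2: manifold})$. The computation underlying everything is that for $x=(x_j)_{j=1}^{\infty}\in\ell^1(\mathbb{N})$ one has $\Vert T_i x\Vert = |\lambda_i|\,\Vert B^i x\Vert = |\lambda_i|\sum_{j=i+1}^{\infty}|x_j|$. Writing $r_i=\sum_{j>i}|x_j|$ for the tail, note $0\le r_i\le\Vert x\Vert$ and $r_i\to 0$ as $i\to\infty$, while for a finitely supported $x$ one has $r_i=0$ for all large $i$. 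I also record $a_n:=\frac{1}{n}\sum_{i=1}^{n}|\lambda_i|$, so that condition $(\ref{prop3: lambda unbounded})$ reads $\limsup_n a_n=\infty$, and I write $e_k$ for the standard unit vectors, for which $\Vert T_i e_k\Vert=|\lambda_i|$ when $i<k$ and $\Vert T_i e_k\Vert=0$ when $i\ge k$.

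The implication $(\ref{prop2: manifold}) \Rightarrow (\ref{prop1: mean LY chaotic})$ is immediate: any non-zero vector in a dense absolutely mean irregular manifold is in particular absolutely mean semi-irregular, so Proposition~\ref{mean Li-Yorke-pair-vector} applies. For $(\ref{prop1: mean LY chaotic}) \Rightarrow (\ref{prop3: lambda unbounded})$ I argue by contraposition. Suppose $\limsup_n a_n<\infty$; since every $a_n$ is finite this forces $M:=\sup_n a_n<\infty$. The key elementary observation is then that $\frac{1}{n}\sum_{i=1}^{n}\Vert T_i x\Vert=\frac{1}{n}\sum_{i=1}^{n}|\lambda_i|r_i\to 0$ for every $x$: given $\varepsilon>0$, pick $N$ with $r_i<\varepsilon$ for $i>N$, split the average at $N$, bound the head $\frac{1}{n}\sum_{i\le N}|\lambda_i|r_i\to 0$ and the tail by $\varepsilon a_n\le\varepsilon M$. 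Hence no vector is absolutely mean semi-irregular, so by Proposition~\ref{mean Li-Yorke-pair-vector} the sequence is not mean Li-Yorke chaotic.

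For the main implication $(\ref{prop3: lambda unbounded}) \Rightarrow (\ref{prop2: manifold})$ I verify the two hypotheses of Theorem~\ref{thm:dense-mean-irregular-manifold}: that the mean asymptotic relation is dense in $X\times X$ and that $(T_i)_i$ is mean sensitive. Density is handled by the finitely supported vectors: if $x$ is supported in $\{1,\dots,m\}$ then $r_i=0$ for $i\ge m$, so $\frac{1}{n}\sum_{i=1}^{n}\Vert T_i x\Vert=\frac{1}{n}\sum_{i<m}|\lambda_i|r_i\to 0$, i.e.\ $x\in\operatorname{MAsym}(T_i,\mathbf{0})$; since such $x$ are dense in $\ell^1(\mathbb{N})$, part~$(\ref{MAsym dense})$ of Lemma~\ref{observations 1} gives density of $\operatorname{MAsym}(T_i)$ in $X\times X$. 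For mean sensitivity I use characterization~$(\ref{thm5: bounded sequence})$ of Theorem~\ref{thm:mean-sensitive}: since $\limsup_n a_n=\infty$, choose $N_k\to\infty$ with $a_{N_k}\to\infty$ and set $y_k=e_{N_k+1}$, a unit (hence bounded) sequence. Because $\Vert T_i e_{N_k+1}\Vert=|\lambda_i|$ for $i\le N_k$, one computes $\frac{1}{N_k}\sum_{i=1}^{N_k}\Vert T_i y_k\Vert=a_{N_k}\to\infty$, so $(T_i)_i$ is mean sensitive. Theorem~\ref{thm:dense-mean-irregular-manifold} then yields a dense absolutely mean irregular manifold, completing the cycle.

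The step I expect to require the most care is establishing mean sensitivity. Because the tails $r_i$ of any fixed vector tend to $0$, no single vector can be made to witness $\limsup_n\frac{1}{n}\sum_{i=1}^{n}\Vert T_i x\Vert=\infty$ directly from $\limsup_n a_n=\infty$: the decay of $r_i$ always damps the averages of a fixed $x$. The resolution is to exploit that mean sensitivity only needs a \emph{bounded sequence} of witnesses, namely condition~$(\ref{thm5: bounded sequence})$ of Theorem~\ref{thm:mean-sensitive}, and the standard basis vectors $e_{N_k+1}$—whose supports recede to infinity along with $N_k$—supply exactly such a sequence. Once this is seen, the construction of genuine absolutely mean irregular vectors (which no explicit single vector here is forced to be) is delegated entirely to the inductive argument inside Theorem~\ref{thm:dense-mean-irregular-manifold}.
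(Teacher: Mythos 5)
Your proof is correct and follows essentially the same route as the paper: the same cycle $(\ref{prop2: manifold})\Rightarrow(\ref{prop1: mean LY chaotic})\Rightarrow(\ref{prop3: lambda unbounded})\Rightarrow(\ref{prop2: manifold})$, the same contraposition via tail-splitting to rule out absolutely mean semi-irregular vectors, and the same verification of the hypotheses of Theorem~\ref{thm:dense-mean-irregular-manifold} using finitely supported vectors for density of the mean asymptotic relation and unit basis vectors as the bounded witnesses in condition~(\ref{thm5: bounded sequence}) of Theorem~\ref{thm:mean-sensitive}. Your only (harmless) deviations are cosmetic: you bound by $\sup_n a_n$ instead of dividing by the limsup $C$ (which slightly streamlines the paper's need to note $C\neq 0$), and you pass through Lemma~\ref{observations 1} rather than observing $X_0\times X_0\subset\mathrm{MAsym}(T_i)$ directly.
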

\begin{proof}
     $(\ref{prop2: manifold})  \Rightarrow  (\ref{prop1: mean LY chaotic})$. It is clear.

     $(\ref{prop1: mean LY chaotic})  \Rightarrow (\ref{prop3: lambda unbounded}) $. 
    Otherwise, we assume that
    \[
        \limsup_{n\to\infty} \frac{1}{n} \sum_{i=1}^n |\lambda_i|=C<\infty.
    \]
    Since $(T_i)_i$ is mean Li-Yorke chaotic, one has $C\neq 0$.
    For each $x=(x_1, x_2, \ldots)\in \ell^1(\mathbb{N})$, we have $\sum_{i=1}^{\infty}|x_i|<\infty$.
    Thus
    \[
        \lim_{n\to \infty} \sum_{i=n}^{\infty} |x_i|=0.
    \]
    Fix $\varepsilon >0$.
    Then there exists $N_0\in \mathbb{N}$ such that for all $n>N_0$, one has
    \[
        \frac{1}{n} \sum_{i=1}^n |\lambda_i|<C+\varepsilon
    \]
    and
    \[
        \sum_{i=n}^{\infty} |x_i|<\frac{\varepsilon}{C}.
    \]
    Thus
    \begin{align*}
        \limsup_{n\to \infty}\frac{1}{n} \sum_{i=1}^n \Vert T_ix \Vert
         & =\limsup_{n\to \infty}\frac{1}{n} \sum_{i=1}^n \Vert \lambda_i B^ix \Vert
        =\limsup_{n\to \infty}\frac{1}{n} \sum_{i=1}^n (|\lambda_i| \sum_{j=i+1}^{\infty} |x_j| ) \\
         & =\limsup_{n\to \infty}\frac{1}{n} \biggr(\sum_{i=1}^{N_0} (|\lambda_i| \sum_{j=i+1}^{\infty} |x_j| )+\sum_{i=N_0+1}^{n} (|\lambda_i| \sum_{j=i+1}^{\infty} |x_j| ) \biggr)   \\
         & \le \limsup_{n\to \infty}\frac{1}{n} \biggr(\sum_{i=1}^{N_0} ( |\lambda_i| \Vert x\Vert)   +\sum_{i=N_0+1}^{n} (|\lambda_i| \frac{\varepsilon}{C} ) \biggr) \\ 
         &\le  \varepsilon+ \frac{\varepsilon^2}{C}.
    \end{align*}

    By the arbitrariness of $\varepsilon$, we have $\limsup_{n\to \infty}\frac{1}{n} \sum_{i=1}^n \Vert T_ix \Vert=0$.
    This implies that there is no absolutely mean semi-irregular vector.
    By Proposition \ref{mean Li-Yorke-pair-vector},
    $(T_i)_{i}$ is not mean Li-Yorke chaotic, which is a contradiction.

    $(\ref{prop3: lambda unbounded})  \Rightarrow (\ref{prop2: manifold})$. 
    Let $X_0$ be the subspace of $X$ consisting of vectors with only finite non-zero coordinates. It is clear that $X_0\times X_0\subset \textrm{MAsym}(T_i)$. Then $\textrm{MAsym}(T_i)$ is dense in $X\times X$.
    For every $k\in\mathbb{N}$, let $y_k=(0,0,\dotsc,0,1 ((k+1)\text{th coordinate)},0,0,\dotsc)$.
    Then $(y_k)_k$ is a bounded sequence in $X$ and
    \[
        \sup_{k}\frac{1}{k}\sum_{i=1}^k \Vert T_i y_k\Vert =
        \sup_{k}\frac{1}{k}\sum_{i=1}^k |\lambda_i|=\infty.
    \]
    By Theorem~\ref{thm:mean-sensitive}, $(T_i)_i$ is mean sensitive.
    Now according to Theorem~\ref{thm:dense-mean-irregular-manifold} $(T_i)_i$ has a dense absolutely mean irregular manifold.
\end{proof}

\section{Mean Li-Yorke chaos for a submultiplicative sequence of operators}\label{sec:LYsub}

In this section, we want to give more characterizations of  mean Li-Yorke chaos and dense mean Li-Yorke chaos for a sequence
of bounded linear operators on a Banach space.
To this end, we need the submultiplicative and almost-commuting properties for a sequence of operators.

Recall that a sequence $(c_i)_{i=1}^{\infty}$ of positive numbers is \emph{submultiplicative} if
\[
    c_{n+m}\leq c_n c_m,\ n,m=1,2,\dotsc.
\]
Inspired by submultiplicative sequence, we introduce the following submultiplicative property for a seuqence of operators.

\begin{defn}
    We say that a sequence $(T_i)_{i=1}^\infty$ of bounded linear operators on a Banach space $X$ is \emph{submultiplicative} if
    there exists a constant $C>0$ such that
    \[
        \Vert T_{i+m}z\Vert \leq C\Vert T_i T_m z \Vert,\
        \forall z\in X \ \text{and} \ i,m\in\mathbb{N}. \tag{$*$}
    \]
\end{defn}
If $T$ is an operator on $X$ and $(\lambda_i)_i$ is a submultiplicative sequence, then the sequence $(\lambda_i T^i)_i$ of operators is submultiplicative.

\begin{lem}\label{lem:mprox-masym}
    Let $(T_i)_{i=1}^\infty$ be a submultiplicative sequence of bounded linear operators on a Banach space $X$.
    If $(T_i)_{i=1}^\infty$ is mean equicontinuous, then every proximal pair is mean asymptotic.
\end{lem}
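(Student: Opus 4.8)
The plan is to reduce the statement to a property of a single vector and then exploit submultiplicativity together with the absolute Ces\`aro boundedness that mean equicontinuity provides. Since $\Vert T_ix-T_iy\Vert=\Vert T_i(x-y)\Vert$, writing $u=x-y$ and $a_n=\frac1n\sum_{i=1}^n\Vert T_iu\Vert$, a pair $(x,y)$ is mean proximal exactly when $\liminf_{n\to\infty}a_n=0$ and mean asymptotic exactly when $\lim_{n\to\infty}a_n=0$. Thus it suffices to prove $\liminf_n a_n=0\Rightarrow\lim_n a_n=0$. First I would invoke Theorem~\ref{thm:mean-UBT}: mean equicontinuity supplies a constant $C'>0$ with $\frac1n\sum_{i=1}^n\Vert T_iv\Vert\leq C'\Vert v\Vert$ for every $v\in X$ and every $n\in\mathbb{N}$; let $C$ be the submultiplicativity constant from $(*)$.

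Two observations drive the argument. First, $\liminf_n a_n=0$ forces $\liminf_m\Vert T_m u\Vert=0$: were $\Vert T_m u\Vert\geq c>0$ for all large $m$, then the averages $a_n$ would be bounded below by a positive constant, contradicting $\liminf_n a_n=0$. Second --- and this is the crux --- for each fixed shift $m$, submultiplicativity applied to $z=u$ gives $\Vert T_{i+m}u\Vert\leq C\Vert T_i(T_m u)\Vert$, and averaging over $i$ while applying absolute Ces\`aro boundedness to the \emph{auxiliary} vector $T_m u$ yields
\[
    \frac1n\sum_{i=1}^n\Vert T_{i+m}u\Vert\leq C\cdot\frac1n\sum_{i=1}^n\Vert T_i(T_m u)\Vert\leq CC'\Vert T_m u\Vert.
\]
The idea is that submultiplicativity lets me factor the tail shift $m$ out of the orbit and absorb it into the single norm $\Vert T_m u\Vert$, which I can control.

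Finally I would combine the two. Fixing $m$ and splitting the Ces\`aro sum, for $N>m$ one gets
\[
    a_N=\frac1N\sum_{j=1}^m\Vert T_ju\Vert+\frac1N\sum_{i=1}^{N-m}\Vert T_{i+m}u\Vert\leq\frac{m\,a_m}{N}+CC'\Vert T_m u\Vert.
\]
Letting $N\to\infty$ with $m$ held fixed annihilates the first term, so $\limsup_{N}a_N\leq CC'\Vert T_m u\Vert$. Since this holds for every $m$ and $\liminf_m\Vert T_m u\Vert=0$, the right-hand side can be made arbitrarily small, whence $\limsup_N a_N=0$ and therefore $\lim_N a_N=0$, which is the claim.

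The hard part is recognizing the key inequality: one must use submultiplicativity precisely so as to rewrite the norms along the shifted orbit $(T_{i+m}u)_i$ as an orbit $(T_i v)_i$ of $v=T_m u$, making absolute Ces\`aro boundedness applicable, and then observe that $\liminf_n a_n=0$ is exactly what guarantees $\Vert T_m u\Vert$ is small for suitable $m$. Everything else is routine bookkeeping, and I note that separability of $X$ is not needed here.
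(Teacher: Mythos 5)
Your argument is sound and, at its core, it is the same as the paper's: pick a time $m$ at which $\Vert T_m u\Vert$ is small, use submultiplicativity to dominate the shifted averages $\frac{1}{n}\sum_{i=1}^{n}\Vert T_{i+m}u\Vert$ by $C$ times the Ces\`aro averages along the orbit of the auxiliary vector $T_m u$, control those averages by mean equicontinuity, and note that the finite head $\frac{1}{N}\sum_{j=1}^{m}\Vert T_j u\Vert$ vanishes as $N\to\infty$. The only real difference in execution is that you pass through Theorem~\ref{thm:mean-UBT} to get the quantitative bound $\frac{1}{n}\sum_{i=1}^{n}\Vert T_i v\Vert\le C'\Vert v\Vert$ (absolute Ces\`aro boundedness, whose proof uses Baire category and hence completeness of $X$), whereas the paper simply feeds the small vector $T_k(x-y)$, of norm less than $\delta$, into the $\varepsilon$--$\delta$ definition of mean equicontinuity; both routes are valid, the paper's being marginally more economical in its hypotheses.

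One caveat on your framing: the hypothesis of the lemma is that $(x,y)$ is \emph{proximal}, i.e.\ $\liminf_{m\to\infty}\Vert T_m x-T_m y\Vert=0$ --- this pointwise condition is exactly what the paper's proof uses (for each $\delta>0$ there is $k$ with $\Vert T_k x-T_k y\Vert<\delta$) --- not that it is mean proximal. Since mean proximality implies proximality and not conversely, your opening claim that ``it suffices to prove $\liminf_n a_n=0\Rightarrow\lim_n a_n=0$'' is a reduction in the wrong direction: it would only establish a formally weaker lemma. The slip turns out to be harmless, because after your first observation you never use $\liminf_n a_n=0$ again; the key inequality and the final limiting step use only $\liminf_m\Vert T_m u\Vert=0$, which is precisely the proximality hypothesis. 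So deleting that first observation and taking proximality itself as the starting point turns your text into a correct proof of the statement exactly as the paper states it.
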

\begin{proof}
    Since $(T_i)_{i}$ is mean equicontinuous, for any $\varepsilon>0$, there exists $\delta>0$ such that for any $x',y'\in X$ with $\Vert x'-y'\Vert<\delta$, we have
    \[
        \limsup_{n\to\infty}\frac{1}{n}\sum_{i=1}^{n}\Vert T_ix'-T_iy'\Vert<\varepsilon.
    \]
    For any proximal pair $(x,y)$,  there exists $k>0$ such that $\Vert T_kx-T_ky\Vert<\delta$. Then we have
    \begin{align*}
        \limsup_{n\to\infty}\frac{1}{n}\sum_{i=1}^n\Vert T_ix-T_iy\Vert
        =    & \limsup_{n\to\infty}\frac{1}{n}\sum_{i=1}^n\Vert T_{i+k}x-T_{i+k}y\Vert  \\
        =    & \limsup_{n\to\infty}\frac{1}{n}\sum_{i=1}^n\Vert T_{i+k}(x-y)\Vert \\
        \leq & \limsup_{n\to\infty}\frac{1}{n}\sum_{i=1}^nC\Vert T_{i}T_k(x-y)\Vert\quad\text{(by ($*$))} \\
        <    & C\varepsilon.
    \end{align*}
    The arbitrariness of $\varepsilon$ shows that
    \[
        \lim_{n\to\infty}\frac{1}{n}\sum_{i=1}^n\Vert T_ix-T_iy\Vert=\limsup_{n\to\infty}\frac{1}{n}\sum_{i=1}^n\Vert T_ix-T_iy\Vert=0,
    \]
    showing that $(x,y)$ is mean asymptotic.
\end{proof}

\begin{coro}
    Let $(T_i)_{i=1}^\infty$ be a submultiplicative sequence of operators on a Banach space $X$.
    If there exists an absolutely mean semi-irregular vector, then $(T_i)_{i=1}^\infty$ is mean sensitive.
\end{coro}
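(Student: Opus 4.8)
The plan is to argue by contradiction, combining the dichotomy of Theorem~\ref{thm:dich-mean-eq-mean-sen} with Lemma~\ref{lem:mprox-masym}. Let $u\in X$ be an absolutely mean semi-irregular vector, so that
\[
    \liminf_{n\to\infty}\frac{1}{n}\sum_{i=1}^n\Vert T_iu\Vert=0
    \quad\text{and}\quad
    \limsup_{n\to\infty}\frac{1}{n}\sum_{i=1}^n\Vert T_iu\Vert>0.
\]
Reading these two conditions for the pair $(u,\mathbf{0})$, the first says precisely that $(u,\mathbf{0})$ is mean proximal, while the second says that $(u,\mathbf{0})$ is \emph{not} mean asymptotic. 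Suppose, toward a contradiction, that $(T_i)_{i=1}^\infty$ is not mean sensitive; then by Theorem~\ref{thm:dich-mean-eq-mean-sen} it is mean equicontinuous.

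The key step is to check that $(u,\mathbf{0})$ is a proximal pair in the sense required by Lemma~\ref{lem:mprox-masym}, i.e.\ that for every $\delta>0$ there is $k\in\mathbb{N}$ with $\Vert T_ku-T_k\mathbf{0}\Vert=\Vert T_ku\Vert<\delta$. I would deduce this directly from $\liminf_{n}\frac{1}{n}\sum_{i=1}^n\Vert T_iu\Vert=0$: if for some $\delta>0$ no such $k$ existed, then $\Vert T_iu\Vert\ge\delta$ for all but finitely many $i$, which forces $\frac{1}{n}\sum_{i=1}^n\Vert T_iu\Vert$ to have liminf at least $\delta>0$, contradicting the vanishing lower mean. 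Hence the vanishing of the Ces\`aro lower mean produces infinitely many indices at which $\Vert T_iu\Vert$ is arbitrarily small, so $(u,\mathbf{0})$ is indeed proximal.

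Now, with $(T_i)_{i=1}^\infty$ both submultiplicative and (under the contradiction hypothesis) mean equicontinuous, Lemma~\ref{lem:mprox-masym} applies to the proximal pair $(u,\mathbf{0})$ and yields that it is mean asymptotic, that is,
\[
    \lim_{n\to\infty}\frac{1}{n}\sum_{i=1}^n\Vert T_iu\Vert=0.
\]
This contradicts $\limsup_{n}\frac{1}{n}\sum_{i=1}^n\Vert T_iu\Vert>0$, so $(T_i)_{i=1}^\infty$ must be mean sensitive. I expect the main obstacle to be the bridging step of the middle paragraph, which identifies the mean-proximal condition (vanishing of the Ces\`aro lower mean) with the existence of a single index $k$ making $\Vert T_ku\Vert$ small; this is exactly what licenses the application of Lemma~\ref{lem:mprox-masym}, and the remaining implications follow immediately from the dichotomy and that lemma.
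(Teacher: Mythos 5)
Your proof is correct and follows essentially the same route as the paper: assume non-sensitivity, invoke the dichotomy of Theorem~\ref{thm:dich-mean-eq-mean-sen} to get mean equicontinuity, and apply Lemma~\ref{lem:mprox-masym} to the pair $(u,\mathbf{0})$ to contradict the positive upper Ces\`aro mean. Your middle paragraph in fact makes explicit a step the paper's proof leaves implicit---that the vanishing lower Ces\`aro mean (mean proximality) yields the pointwise smallness of $\Vert T_k u\Vert$ at some index $k$, which is the form of proximality actually required by the hypothesis of Lemma~\ref{lem:mprox-masym}.
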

\begin{proof}
    Otherwise, we assume that $(T_i)_{i}$ is not mean sensitive, by Theorem \ref{thm:dich-mean-eq-mean-sen}, $(T_i)_{i}$ is mean equicontinuous.
    Let $x \in X$ be an absolutely mean semi-irregular vector, then $(x,{\bf 0})$ is a mean proximal pair.
    By Lemma \ref{lem:mprox-masym}, $(x,{\bf 0})$ is mean asymptotic, that is
    \[
        \limsup_{n\to \infty}\frac{1}{n}\sum_{i=1}^n \Vert T_ix \Vert =\lim_{n\to \infty}\frac{1}{n}\sum_{i=1}^n \Vert T_ix \Vert =0,
    \]
    which is a contradiction.
    Thus $(T_i)_{i=1}^\infty$ is mean sensitive.
\end{proof}

We have the following characterization of  dense mean Li-Yorke chaos of a sequences of bounded linear operators with the submultiplicative property.

\begin{thm}\label{thm:equi-dmlyc}
    Let $(T_i)_{i=1}^{\infty}$ be a submultiplicative sequence of bounded linear operators on a separable Banach space $X$.
    Then the following assertions are equivalent:
    \begin{enumerate}
        \item \label{thm1: densely mean LY chaotic}
             $(T_i)_{i}$ is densely mean Li-Yorke chaotic;
        \item \label{thm2: dense mean LY scramled pairs}
             $(T_i)_{i}$ has a dense set of mean Li-Yorke pairs;
        \item \label{thm3: dense mean semi-irr}
             $(T_i)_{i}$ has a dense set of absolutely mean semi-irregular vectors;
        \item \label{thm4: residual mean irr}
              $(T_i)_{i}$ has a residual set of absolutely mean irregular vectors;
        \item \label{thm5: MProx dense-mean sensitive}
              the mean proximal relation of $(T_i)_{i}$ is dense in $X\times X$ and $(T_i)_{i}$ is mean sensitive;
        \item \label{thm: MProx cell dense-mean semi-irregular vector}
        the mean proximal cell of $\mathbf{0}$ is dense in $X$ and there exists a vector $x\in X$ such that
              \[
                  \limsup_{n\to\infty}\frac{1}{n}\sum_{i=1}^{n}\Vert T_ix\Vert>0.
              \]
    \end{enumerate}
\end{thm}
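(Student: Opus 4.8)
The plan is to exploit that conditions (\ref{thm4: residual mean irr}) and (\ref{thm5: MProx dense-mean sensitive}) of this theorem are literally conditions (\ref{Thm: residual set of ab mean irregualr vectors}) and (\ref{Thm: MProx dense and mean sensitive}) of Theorem~\ref{thm:denseLi-Yorke-delta-chaos}, which already knows them to be equivalent and, by that theorem, equivalent to being densely mean Li-Yorke extremely chaotic. Since every mean Li-Yorke extreme pair has $\limsup_n\frac{1}{n}\sum_{i=1}^n\Vert T_ix-T_iy\Vert=\infty>0$ and is therefore a mean Li-Yorke pair, densely mean Li-Yorke extremely chaotic implies densely mean Li-Yorke chaotic; this gives (\ref{thm4: residual mean irr})$\Rightarrow$(\ref{thm1: densely mean LY chaotic}) for free. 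The entire role of the submultiplicative hypothesis is to run the reverse arrows, that is, to upgrade the ``ordinary'' data in (\ref{thm1: densely mean LY chaotic})--(\ref{thm3: dense mean semi-irr}) and in (\ref{thm: MProx cell dense-mean semi-irregular vector}) back up to the ``extreme'' condition (\ref{thm5: MProx dense-mean sensitive}).

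I would close the cycle (\ref{thm1: densely mean LY chaotic})$\Rightarrow$(\ref{thm2: dense mean LY scramled pairs})$\Rightarrow$(\ref{thm5: MProx dense-mean sensitive})$\Rightarrow$(\ref{thm4: residual mean irr})$\Rightarrow$(\ref{thm1: densely mean LY chaotic}) as follows. A densely uncountable mean Li-Yorke scrambled set $K$ is dense and uncountable, so every nonempty open box $U\times V$ contains distinct points of $K$, whence $\{(x,y)\in K\times K\colon x\neq y\}$ is a dense set of mean Li-Yorke pairs and (\ref{thm2: dense mean LY scramled pairs}) holds. For (\ref{thm2: dense mean LY scramled pairs})$\Rightarrow$(\ref{thm5: MProx dense-mean sensitive}): a dense set of mean Li-Yorke pairs sits inside $\text{MProx}(T_i)$, so the mean proximal relation is dense, and the difference $x-y$ of any single such pair is an absolutely mean semi-irregular vector, which forces mean sensitivity by the corollary following Lemma~\ref{lem:mprox-masym} (this is precisely where submultiplicativity enters). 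The step (\ref{thm5: MProx dense-mean sensitive})$\Rightarrow$(\ref{thm4: residual mean irr}) is the implication (\ref{Thm: MProx dense and mean sensitive})$\Rightarrow$(\ref{Thm: residual set of ab mean irregualr vectors}) of Theorem~\ref{thm:denseLi-Yorke-delta-chaos}, and (\ref{thm4: residual mean irr})$\Rightarrow$(\ref{thm1: densely mean LY chaotic}) was noted above. Condition (\ref{thm3: dense mean semi-irr}) is then slotted in: (\ref{thm4: residual mean irr})$\Rightarrow$(\ref{thm3: dense mean semi-irr}) because absolutely mean irregular vectors are a fortiori semi-irregular and residual sets are dense, while (\ref{thm3: dense mean semi-irr})$\Rightarrow$(\ref{thm5: MProx dense-mean sensitive}) because a dense set of semi-irregular vectors lies in $\text{MProx}(T_i,\mathbf{0})$ (so the cell, hence by Lemma~\ref{observations 1} the relation, is dense) and one such vector again forces mean sensitivity via the same corollary.

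The delicate point, and the main obstacle, is the equivalence (\ref{thm5: MProx dense-mean sensitive})$\Leftrightarrow$(\ref{thm: MProx cell dense-mean semi-irregular vector}), since (\ref{thm: MProx cell dense-mean semi-irregular vector}) only assumes a vector with $\limsup_n\frac{1}{n}\sum_{i=1}^n\Vert T_ix\Vert>0$, whereas mean sensitivity is detected by vectors with $\limsup=\infty$ (Theorem~\ref{thm:mean-sensitive}). The forward direction is routine: if (\ref{thm5: MProx dense-mean sensitive}) holds, the proximal cell of $\mathbf{0}$ is dense by Lemma~\ref{observations 1} and mean sensitivity supplies a vector with $\limsup=\infty>0$. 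For the converse I would argue by contradiction: if $(T_i)_i$ were not mean sensitive, then by the dichotomy Theorem~\ref{thm:dich-mean-eq-mean-sen} it is mean equicontinuous, so by Lemma~\ref{lem:mprox-masym} (using submultiplicativity) every mean proximal pair is mean asymptotic and hence $\text{MProx}(T_i,\mathbf{0})=\text{MAsym}(T_i,\mathbf{0})$. But $\text{MProx}(T_i,\mathbf{0})$ is $G_\delta$ by Lemma~\ref{mean proximal} and dense by hypothesis, so $\text{MAsym}(T_i,\mathbf{0})$ is residual, whence $\text{MAsym}(T_i,\mathbf{0})=X$ by Lemma~\ref{observations 1}. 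This forces $\lim_n\frac{1}{n}\sum_{i=1}^n\Vert T_ix\Vert=0$ for \emph{every} $x\in X$, contradicting the existence of a vector with positive $\limsup$; therefore $(T_i)_i$ is mean sensitive, and together with density of the proximal relation this yields (\ref{thm5: MProx dense-mean sensitive}). This ``$0$--$1$ law'' upgrade of dense to residual, powered by submultiplicativity, is the genuine content of the theorem beyond Theorem~\ref{thm:denseLi-Yorke-delta-chaos}.
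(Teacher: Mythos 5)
Your proposal is correct and takes essentially the same approach as the paper: both arguments rest on Theorem~\ref{thm:denseLi-Yorke-delta-chaos} for the implications passing through the residual set of absolutely mean irregular vectors, on the submultiplicativity-based Lemma~\ref{lem:mprox-masym} (equivalently its corollary) to turn a single absolutely mean semi-irregular vector into mean sensitivity, and on the Baire-category argument combining Lemma~\ref{mean proximal}, Lemma~\ref{observations 1} and the dichotomy Theorem~\ref{thm:dich-mean-eq-mean-sen} for the crucial step from assertion (6) to mean sensitivity. The only difference is bookkeeping: the paper closes the single cycle $(1)\Rightarrow(2)\Rightarrow(3)\Rightarrow(6)\Rightarrow(5)\Rightarrow(4)\Rightarrow(1)$, while you prove a few redundant extra arrows and treat $(5)\Leftrightarrow(6)$ separately, but each individual arrow is established by the same argument.
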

\begin{proof}
     $(\ref{thm1: densely mean LY chaotic})  \Rightarrow (\ref{thm2: dense mean LY scramled pairs})  \Rightarrow (\ref{thm3: dense mean semi-irr})  \Rightarrow  (\ref{thm: MProx cell dense-mean semi-irregular vector}) $.  Those are clear.

    $(\ref{thm5: MProx dense-mean sensitive})   \Rightarrow (\ref{thm4: residual mean irr})    \Rightarrow  (\ref{thm1: densely mean LY chaotic}) $.  They follow from Theorem~\ref{thm:denseLi-Yorke-delta-chaos}.

     $(\ref{thm: MProx cell dense-mean semi-irregular vector})  \Rightarrow  (\ref{thm5: MProx dense-mean sensitive})$. Since $\mathrm{MProx}(T_i,\mathbf{0})$ is dense in $X$, by Lemma \ref{observations 1} $(\ref{MProx dense})$ we know that $\mathrm{MProx}(T_i)$ is dense in $X\times X$. And by Lemma \ref{mean proximal} $(\ref{Lem: MProx cell G-delta})$, $\mathrm{MProx}(T_i,\mathbf{0})$ is a residual set. If $\mathrm{MProx}(T_i,\mathbf{0})=\mathrm{MAsym}(T_i,\mathbf{0})$, then by Lemma \ref{observations 1} $(\ref{MAsym residual})$ we know that $\mathrm{MAsym}(T_i,\mathbf{0})=X$, which contradicts $(\ref{thm: MProx cell dense-mean semi-irregular vector})$. Hence there is an absolutely mean semi-irregular vector in $\mathrm{MProx}(T_i,\mathbf{0})$. By Lemma \ref{lem:mprox-masym}, $(T_i)_{i}$ is not mean equicontinuous, thus $(T_i)_{i}$ is mean sensitive by Theorem \ref{thm:dich-mean-eq-mean-sen} .
\end{proof}

\begin{rem}\label{rem:separability-of-X-2}
According to Remark~\ref{rem:separability-of-X},
(\ref{thm2: dense mean LY scramled pairs})-(\ref{thm: MProx cell dense-mean semi-irregular vector}) of Theorem~\ref{thm:equi-dmlyc}
are also equivalent in the condition that $X$ is a Banach space.
\end{rem}

In \cite{B2000}, Bernal-Gonz\'{a}lez introduced the following almost-commuting property for a sequence of operators.

\begin{defn}
    We say that a sequence $(T_i)_{i=1}^\infty$ of bounded linear operators on a Banach space $X$ is \emph{almost-commuting} if
    \[
        \lim_{i\to\infty} (T_iT_k-T_kT_i)x=0, \ \forall x\in X, k\in\mathbb{N}.
    \]
\end{defn}

\begin{prop}\label{prop:inv}
    Let $(T_i)_{i=1}^\infty$ be an almost-commuting sequence of bounded linear operators on a Banach space $X$ and $(N_n)_n$ be a sequence in $\mathbb{N}$.
    Then the collection
    \[
        X_0= \biggl\{x\in X\colon \lim_{n\to\infty}\frac{1}{N_n}\sum_{i=1}^{N_n}\Vert T_i x\Vert =0\biggr\}
    \]
    is a $(T_i)_{i=1}^\infty$-invariant subspace of $X$.
\end{prop}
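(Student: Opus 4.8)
The plan is to verify the two defining properties of a $(T_i)$-invariant subspace separately: first that $X_0$ is a linear subspace of $X$, and then that $T_k(X_0)\subseteq X_0$ for every $k\in\mathbb{N}$. The first property is routine and the second is where the almost-commuting hypothesis does the real work.

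For the subspace property, take $x,y\in X_0$ and scalars $\alpha,\beta$. Using the linearity of each $T_i$ together with the triangle inequality and homogeneity of the norm, I obtain
\[
    \frac{1}{N_n}\sum_{i=1}^{N_n}\Vert T_i(\alpha x+\beta y)\Vert
    \le |\alpha|\,\frac{1}{N_n}\sum_{i=1}^{N_n}\Vert T_i x\Vert
    +|\beta|\,\frac{1}{N_n}\sum_{i=1}^{N_n}\Vert T_i y\Vert,
\]
and both terms on the right tend to $0$ as $n\to\infty$ by the definition of $X_0$. Hence $\alpha x+\beta y\in X_0$, so $X_0$ is a subspace.

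The invariance is the heart of the matter. Fix $x\in X_0$ and $k\in\mathbb{N}$; I want to show $T_kx\in X_0$, that is, $\frac{1}{N_n}\sum_{i=1}^{N_n}\Vert T_iT_k x\Vert\to 0$. The key device is the commutator decomposition $T_iT_kx=T_kT_ix+(T_iT_k-T_kT_i)x$, which together with the boundedness of $T_k$ yields
\[
    \Vert T_iT_kx\Vert\le \Vert T_k\Vert\,\Vert T_ix\Vert+\varepsilon_i,
    \qquad \varepsilon_i:=\Vert(T_iT_k-T_kT_i)x\Vert .
\]
Averaging over $i=1,\dotsc,N_n$ gives
\[
    \frac{1}{N_n}\sum_{i=1}^{N_n}\Vert T_iT_kx\Vert
    \le \Vert T_k\Vert\,\frac{1}{N_n}\sum_{i=1}^{N_n}\Vert T_ix\Vert
    +\frac{1}{N_n}\sum_{i=1}^{N_n}\varepsilon_i,
\]
and the first term on the right tends to $0$ since $x\in X_0$ and $\Vert T_k\Vert$ is a fixed finite constant.

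It remains to control the averaged commutator errors, and this is the step I expect to be the main obstacle. The almost-commuting property of $(T_i)_i$ says precisely that $\varepsilon_i=\Vert(T_iT_k-T_kT_i)x\Vert\to 0$ as $i\to\infty$. Since the Ces\`aro means of a null sequence of nonnegative reals again converge to $0$, one has $\frac{1}{N}\sum_{i=1}^{N}\varepsilon_i\to 0$ as $N\to\infty$, and hence $\frac{1}{N_n}\sum_{i=1}^{N_n}\varepsilon_i\to 0$ along $(N_n)_n$ as well, using that $N_n\to\infty$ (which is the relevant case; for a bounded index range the averages are handled directly). Combining the two estimates yields $\frac{1}{N_n}\sum_{i=1}^{N_n}\Vert T_iT_kx\Vert\to 0$, so $T_kx\in X_0$ and $X_0$ is $(T_i)$-invariant. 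The only delicate point is thus the passage from $\varepsilon_i\to 0$ to the vanishing of its averages along $(N_n)_n$; everything else reduces to the triangle inequality and the defining limit for $X_0$.
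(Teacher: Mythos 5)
Your argument is essentially the paper's own proof: the subspace part is the same triangle-inequality computation, and the invariance part uses the very same decomposition $T_iT_kx=(T_iT_k-T_kT_i)x+T_kT_ix$, the bound $\Vert T_kT_ix\Vert\le\Vert T_k\Vert\,\Vert T_ix\Vert$, and the vanishing of the averaged commutator norms. If anything you are more explicit than the paper, which simply asserts that the averaged commutator term tends to $0$ ``from that $(T_i)_i$ is almost-commuting'' without isolating the Ces\`aro-mean-of-a-null-sequence step.

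The one point to correct is your parenthetical claim that for a bounded index range ``the averages are handled directly.'' They cannot be: almost-commuting controls $(T_iT_k-T_kT_i)x$ only for large $i$, whereas for bounded $(N_n)$ membership in $X_0$ only forces $T_ix=0$ for the finitely many relevant indices $i$, so that $T_iT_kx=(T_iT_k-T_kT_i)x$ there is completely uncontrolled. Indeed, the proposition as literally stated fails without $N_n\to\infty$: on $X=\mathbb{R}^2$ take $T_1(a,b)=(b,0)$, $T_2(a,b)=(b,a)$, $T_i=I$ for $i\ge 3$, and $N_n=1$ for all $n$; this sequence is almost-commuting (all commutators vanish for $i\ge 3$) and $X_0=\ker T_1=\mathbb{R}\times\{0\}$, yet $T_2(1,0)=(0,1)\notin X_0$. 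So $N_n\to\infty$, which you correctly identify as the crux, is a genuine hypothesis rather than a dispensable case distinction. The paper's proof makes the same silent assumption, and in its only application (the proof of Theorem~\ref{thm:euqi-lmyc}) the sequence $(n_k)_k$ is increasing, so nothing breaks there; but in your write-up you should either assume $N_n\to\infty$ outright or delete the claim about the bounded case.
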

\begin{proof}
    For every $x, y\in X_0$, we have
    \begin{align*}
        \limsup_{n\to \infty} \frac{1}{N_n}\sum_{i=1}^{N_n} \Vert T_i (x-y) \Vert & =\limsup_{n\to \infty} \frac{1}{N_n}\sum_{i=1}^{N_n} \Vert T_i x-T_iy \Vert   \\
        & \le \lim_{n\to \infty} \frac{1}{N_n}\sum_{i=1}^{N_n} \Vert T_i x \Vert+\lim_{n\to \infty} \frac{1}{N_n}\sum_{i=1}^{N_n} \Vert T_iy  \Vert \\
        & =0,
    \end{align*}
    and it is clearly that $\lambda x\in X_0$ for every $\lambda \in \mathbb{K}$.
    Thus $X_0$ is a subspace of $X$.

    For every $x\in X_0$ and $k\in \mathbb{N}$,
    \begin{align*}
        \lim_{n\to \infty} \frac{1}{N_n}\sum_{i=1}^{N_n} \Vert T_i (T_kx) \Vert & =\lim_{n\to \infty} \frac{1}{N_n}\sum_{i=1}^{N_n} \Vert T_iT_kx-T_kT_ix+ T_kT_ix\Vert \\
      & \le \lim_{n\to \infty} \frac{1}{N_n}\sum_{i=1}^{N_n} \Vert T_iT_kx-T_kT_ix \Vert+\lim_{n\to \infty} \frac{1}{N_n}\sum_{i=1}^{N_n} \Vert T_kT_ix \Vert  \\
      & \le \lim_{n\to \infty} \frac{1}{N_n}\sum_{i=1}^{N_n} \Vert T_iT_kx-T_kT_ix \Vert+ \Vert T_k \Vert  \lim_{n\to \infty} \frac{1}{N_n}\sum_{i=1}^{N_n} \Vert  T_ix \Vert \\
       & =0,
    \end{align*}
    the last equation follows from that $(T_i)_i$ is alomost-commuting.
    Then $T_kx\in X_0$, which implies that $X_0$ is $(T_i)_{i}$-invariant.
\end{proof}

We have the following characterization of mean Li-Yorke chaos for a sequence of bounded linear operators with the almost-commuting and submultiplicaitve properties.

\begin{thm}\label{thm:euqi-lmyc}
    Let $(T_i)_{i=1}^{\infty}$ be an almost-commuting, submultiplicative sequence of bounded linear operators on a Banach space $X$.
    Then the following assertions are equivalent:
    \begin{enumerate}
        \item \label{thm: mean LY scrambled pair}
               $(T_i)_{i}$ has a  mean Li-Yorke pair;
        \item \label{thm: mean semi-irregular vector}
              $(T_i)_{i}$ has an  absolutely mean semi-irregular vector;
        \item \label{thm: mean LY chaotic}
              $(T_i)_{i}$ is mean Li-Yorke chaotic;
        \item \label{thm: mean irregular vector}
              $(T_i)_{i}$ has an  absolutely mean irregular vector;
        \item \label{thm: invariant space-residual mean irregular vectors}
              there exists a $(T_i)_{i}$-invariant closed subspace $X_0$ of $X$ such that
              $X_0$ has a residual subset of absolutely mean irregular vectors.
    \end{enumerate}
\end{thm}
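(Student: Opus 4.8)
The plan is to run the cycle $(\ref{thm: mean semi-irregular vector})\Rightarrow(\ref{thm: invariant space-residual mean irregular vectors})\Rightarrow(\ref{thm: mean irregular vector})\Rightarrow(\ref{thm: mean semi-irregular vector})$ and to dispose of $(\ref{thm: mean LY scrambled pair})\Leftrightarrow(\ref{thm: mean semi-irregular vector})\Leftrightarrow(\ref{thm: mean LY chaotic})$ first, since these three are exactly Proposition~\ref{mean Li-Yorke-pair-vector} and need neither hypothesis. Two links of the cycle are immediate: $(\ref{thm: mean irregular vector})\Rightarrow(\ref{thm: mean semi-irregular vector})$ because an absolutely mean irregular vector is a fortiori absolutely mean semi-irregular, and $(\ref{thm: invariant space-residual mean irregular vectors})\Rightarrow(\ref{thm: mean irregular vector})$ because a closed subspace $X_0$ of the Banach space $X$ is complete, hence a Baire space, so a residual subset of $X_0$ is nonempty and furnishes an absolutely mean irregular vector. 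Thus the whole content is the implication $(\ref{thm: mean semi-irregular vector})\Rightarrow(\ref{thm: invariant space-residual mean irregular vectors})$.

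For that implication I would start from a semi-irregular vector $x$. From $\liminf_{n}\frac1n\sum_{i=1}^{n}\Vert T_i x\Vert=0$ I would pick an increasing sequence $(N_n)_n$ with $\lim_n\frac{1}{N_n}\sum_{i=1}^{N_n}\Vert T_i x\Vert=0$, and set $X_0'=\{z\in X\colon \lim_n\frac{1}{N_n}\sum_{i=1}^{N_n}\Vert T_i z\Vert=0\}$. By Proposition~\ref{prop:inv}, the almost-commuting hypothesis makes $X_0'$ a $(T_i)$-invariant subspace, and it contains $x$. I would then take $X_0=\overline{X_0'}$, which is again $(T_i)$-invariant (as $T_k$ is continuous and maps $X_0'$ into itself) and closed, so $(T_i|_{X_0})$ is a submultiplicative sequence of bounded operators on the Banach space $X_0$.

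Inside $X_0$ I would establish two residuality facts. First, the mean proximal cell of $\mathbf 0$ in $X_0$, namely $P=\{z\in X_0\colon \liminf_n\frac1n\sum_{i=1}^{n}\Vert T_i z\Vert=0\}$, is a $G_\delta$ subset of $X_0$ by Lemma~\ref{mean proximal}, and it contains $X_0'$ (for $z\in X_0'$ the $\liminf$ over all $n$ is bounded by the limit along $(N_n)$, hence is $0$); since $X_0'$ is dense in $X_0$, the set $P$ is residual. Second, I claim $(T_i|_{X_0})$ is mean sensitive. Indeed, if it were not, the dichotomy Theorem~\ref{thm:dich-mean-eq-mean-sen} applied on $X_0$ would make it mean equicontinuous; then, as the restricted sequence is still submultiplicative, Lemma~\ref{lem:mprox-masym} would force every mean proximal pair to be mean asymptotic. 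But $(x,\mathbf 0)$ is mean proximal (because $\liminf_x=0$, which also gives $\inf_k\Vert T_k x\Vert=0$ so the single-time closeness required by that lemma holds), so we would get $\lim_n\frac1n\sum_{i=1}^{n}\Vert T_i x\Vert=0$, contradicting $\limsup_x>0$. Hence $(T_i|_{X_0})$ is mean sensitive, and Theorem~\ref{thm:mean-sensitive} applied on $X_0$ shows $E=\{z\in X_0\colon\limsup_n\frac1n\sum_{i=1}^{n}\Vert T_i z\Vert=\infty\}$ is residual in $X_0$. Finally $P\cap E$ is residual in $X_0$ (Baire) and is precisely the set of absolutely mean irregular vectors of $X_0$, which is $(\ref{thm: invariant space-residual mean irregular vectors})$.

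The step I expect to be the crux is producing, inside the necessarily small proximal subspace $X_0$, a vector of infinite mean, i.e.\ the $\limsup=\infty$ half of irregularity. A head-on construction (an infinite series of orbit vectors arranged to spike at growing scales while the means still vanish along $(N_n)$) is unpleasant precisely because passing to the closure $\overline{X_0'}$ destroys pointwise proximality. The device that sidesteps it is the indirect mean-sensitivity argument above: rather than build the blow-up vector, I feed the dichotomy and Lemma~\ref{lem:mprox-masym} the single semi-irregular $x\in X_0$, reading the failure of mean equicontinuity of the restriction directly off $\limsup_x>0$, and let Theorem~\ref{thm:mean-sensitive} upgrade mean sensitivity to residuality of $E$. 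The details that need genuine care are that submultiplicativity and invariance survive restriction to the closed subspace $X_0$, and that the mean proximal pair $(x,\mathbf 0)$ is proximal in the sense demanded by Lemma~\ref{lem:mprox-masym}.
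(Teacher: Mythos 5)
Your proof is correct and takes essentially the same route as the paper: the same splitting into $(\ref{thm: mean LY scrambled pair})\Leftrightarrow(\ref{thm: mean semi-irregular vector})\Leftrightarrow(\ref{thm: mean LY chaotic})$ via Proposition~\ref{mean Li-Yorke-pair-vector}, the trivial implications $(\ref{thm: invariant space-residual mean irregular vectors})\Rightarrow(\ref{thm: mean irregular vector})\Rightarrow(\ref{thm: mean semi-irregular vector})$, and, for $(\ref{thm: mean semi-irregular vector})\Rightarrow(\ref{thm: invariant space-residual mean irregular vectors})$, the identical construction of the mean-null subspace along a subsequence, made $(T_i)_i$-invariant by Proposition~\ref{prop:inv} and then closed. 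The only difference is bookkeeping: the paper concludes by citing Theorem~\ref{thm:equi-dmlyc} together with Remark~\ref{rem:separability-of-X-2}, whereas you inline exactly the argument behind that citation (mean sensitivity of the restriction via Theorem~\ref{thm:dich-mean-eq-mean-sen} and Lemma~\ref{lem:mprox-masym}, then residuality of the absolutely mean irregular vectors via Lemma~\ref{mean proximal} and Theorem~\ref{thm:mean-sensitive}).
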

\begin{proof} 
   $(\ref{thm: mean LY scrambled pair})  \Leftrightarrow (\ref{thm: mean semi-irregular vector})   \Leftrightarrow  (\ref{thm: mean LY chaotic})$. Those follow from Proposition \ref{mean Li-Yorke-pair-vector}.  
   
     $ (\ref{thm: invariant space-residual mean irregular vectors}) \Rightarrow  (\ref{thm: mean irregular vector})  \Rightarrow (\ref{thm: mean semi-irregular vector}). $ Those are clear.
     
     $(\ref{thm: mean semi-irregular vector}) \Rightarrow (\ref{thm: invariant space-residual mean irregular vectors}) $. 
     Let $x_0$ be an absolutely mean semi-irregular vector. Then there exists an increasing sequence of integers $(n_k)_k$ such that
    \[
        \lim_{k\to\infty}\frac{1}{n_k}\sum_{i=1}^{n_k}\Vert T_ix_0\Vert=0.
    \]
    Define
    \[
        X_0=\left\{y\in X\colon \lim_{k\to\infty}\frac{1}{n_k}\sum_{i=1}^{n_k}\Vert T_i y\Vert=0\right\}.
    \]
     As $(T_i)_{i=1}^\infty$ is almost-commuting, by Proposition \ref{prop:inv} $X_0$ is a $(T_i)_{i}$-invariant subspace of $X$.
    Let $\widetilde{X}_0$ be the closure of $X_0$.
    Then $\widetilde{X}_0$ is a $(T_i)_{i}$-invariant closed subspace of $X$.

    Clearly $x_0\in \widetilde{X}_0$ and 
    \[
        \limsup_{n\to\infty}\frac{1}{n}\sum_{i=1}^n\Vert T_ix_0\Vert>0.
    \]
    By the construction of $\widetilde{X}_0$, the mean proximal cell of $\mathbf{0}$ contains $X_0$, and then is dense in $\widetilde{X}_0$. Now by Theorem \ref{thm:equi-dmlyc} and Remark~\ref{rem:separability-of-X-2} , we know that $(T_i|_{\widetilde{X}_0})_{i=1}^\infty$ has a residual set of absolutely mean irregular vectors.
\end{proof}

By the proof of Theorem~\ref{thm:euqi-lmyc}, we have the following consequence.
\begin{coro}
    Let $(T_i)_{i=1}^{\infty}$ be an almost-commuting, submultiplicative sequence of bounded linear operators on a Banach space $X$.
    Then the set of all absolutely mean irregular vectors is dense in the set of all absolutely mean semi-irregular vectors.
\end{coro}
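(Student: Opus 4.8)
The plan is to read off the required density directly from the construction already carried out in the proof of Theorem~\ref{thm:euqi-lmyc}. First I would fix notation: write $A$ for the set of absolutely mean irregular vectors and $B$ for the set of absolutely mean semi-irregular vectors. Every vector in $A$ automatically lies in $B$, since $\limsup_n\frac{1}{n}\sum_{i=1}^n\Vert T_ix\Vert=\infty>0$ forces the semi-irregular condition; hence $A\subseteq B$ and the assertion ``$A$ is dense in $B$'' means precisely $B\subseteq\overline{A}$. Thus it suffices to show that an arbitrary $x_0\in B$ can be approximated in norm by vectors of $A$.

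The key is to attach to $x_0$ the closed invariant subspace from the proof of Theorem~\ref{thm:euqi-lmyc}. Since $x_0$ is absolutely mean semi-irregular its liminf vanishes, so I would choose an increasing sequence $(n_k)_k$ in $\mathbb{N}$ with $\lim_{k\to\infty}\frac{1}{n_k}\sum_{i=1}^{n_k}\Vert T_ix_0\Vert=0$ and set
\[
    X_0=\biggl\{y\in X\colon \lim_{k\to\infty}\frac{1}{n_k}\sum_{i=1}^{n_k}\Vert T_iy\Vert=0\biggr\}.
\]
Because $(T_i)_i$ is almost-commuting, Proposition~\ref{prop:inv} gives that $X_0$ is a $(T_i)_i$-invariant subspace, so its closure $\widetilde{X}_0$ is a $(T_i)_i$-invariant closed subspace of $X$ containing $x_0$.

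The heart of the argument is then to apply Theorem~\ref{thm:equi-dmlyc} to the restricted sequence $(T_i|_{\widetilde{X}_0})_i$, exactly as in Theorem~\ref{thm:euqi-lmyc}. On $\widetilde{X}_0$ the mean proximal cell of $\mathbf{0}$ contains $X_0$ and is therefore dense, while $x_0\in\widetilde{X}_0$ witnesses $\limsup_n\frac{1}{n}\sum_{i=1}^n\Vert T_ix_0\Vert>0$; this is condition (6) of Theorem~\ref{thm:equi-dmlyc}. The restriction of a submultiplicative sequence is again submultiplicative, and by Remark~\ref{rem:separability-of-X-2} no separability hypothesis on $\widetilde{X}_0$ is needed, so condition (4) follows: $(T_i|_{\widetilde{X}_0})_i$ has a residual set of absolutely mean irregular vectors. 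As $\widetilde{X}_0$ is a closed subspace of a Banach space it is complete, hence this residual set is dense in $\widetilde{X}_0$; and since the restriction preserves norms, each such vector is absolutely mean irregular in $X$, i.e. lies in $A$. Because $x_0\in\widetilde{X}_0$, it is approximated arbitrarily well by elements of $A$, giving $x_0\in\overline{A}$ and completing the proof.

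I expect there to be no genuine obstacle, only two routine points to record: that ``irregularity inside $\widetilde{X}_0$'' coincides with ``irregularity in $X$'', and that residual-in-$\widetilde{X}_0$ yields a norm approximation of $x_0$. Both are immediate from $\Vert T_i|_{\widetilde{X}_0}y\Vert=\Vert T_iy\Vert$ together with the Baire property of the complete space $\widetilde{X}_0$. Everything else is a direct reuse of the subspace construction already established in Theorem~\ref{thm:euqi-lmyc}, so no new estimate is required.
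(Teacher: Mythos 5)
Your proposal is correct and takes essentially the same route as the paper: the paper proves this corollary simply by invoking the proof of Theorem~\ref{thm:euqi-lmyc}, in which each absolutely mean semi-irregular vector $x_0$ is placed inside a closed $(T_i)_i$-invariant subspace $\widetilde{X}_0$ (via Proposition~\ref{prop:inv}) on which Theorem~\ref{thm:equi-dmlyc} together with Remark~\ref{rem:separability-of-X-2} yields a residual, hence dense, set of absolutely mean irregular vectors. The two ``routine points'' you flag --- that irregularity for the restricted sequence coincides with irregularity in $X$ because restriction preserves norms, and that residual subsets of the complete space $\widetilde{X}_0$ are dense --- are exactly the steps the paper leaves implicit, so nothing is missing.
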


In \cite{BBP2020}, Bernardes et al. introduced the mean Li-Yorke chaos criterion for an operator. Here we generalize the concept to a sequence of operators.

\begin{defn}
    Let $(T_i)_{i=1}^\infty$ be a sequence of bounded linear operators on a Banach space $X$.
    We say that $(T_i)_{i=1}^\infty$ satisfies the \emph{mean Li-Yorke chaos criterion} if there exists a subset $X_0$ of $X$ with the following properties:
    \begin{enumerate}
        \item for every $x\in X_0$, $\liminf_{n\to \infty} \frac{1}{n} \sum_{i=1}^n \Vert T_ix \Vert=0$;
        \item there exist a sequence $(y_k)_k$ in $\overline{ \text{span}(X_0)}$ and a sequence $(N_k)_k$ in $\mathbb{N}$ such that for every $k\in \mathbb{N}$,
              \[
                  \frac{1}{N_k} \sum_{i=1}^{N_k} \Vert T_iy_k \Vert \ge k \Vert y_k \Vert.
              \]
    \end{enumerate}
\end{defn}

\begin{thm}
    Let $(T_i)_{i=1}^{\infty}$ be an almost-commuting, submultiplicative sequence of bounded linear operators on a Banach space $X$.
    Then $(T_i)_{i=1}^\infty$ is mean Li-Yorke chaotic if and only if it satisfies the mean Li-Yorke chaos criterion.
\end{thm}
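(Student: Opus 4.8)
The plan is to prove the two implications separately, and I expect that only the forward direction will actually use the almost-commuting and submultiplicative hypotheses; the converse should go through for an arbitrary sequence of bounded linear operators.

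For the converse, assume $(T_i)_{i=1}^\infty$ satisfies the criterion, with witnessing data $X_0$, $(y_k)_k$ and $(N_k)_k$, and put $Z=\overline{\text{span}(X_0)}$, a closed and hence complete subspace of $X$. I would split according to the behaviour of the upper Ces\`aro averages on $X_0$. If some $x\in X_0$ has $\limsup_{n\to\infty}\frac1n\sum_{i=1}^n\|T_ix\|>0$, then with property~(1) this $x$ is absolutely mean semi-irregular, and Proposition~\ref{mean Li-Yorke-pair-vector} already yields mean Li-Yorke chaos. Otherwise every $x\in X_0$ has $\limsup_{n\to\infty}\frac1n\sum_{i=1}^n\|T_ix\|=0$, hence genuine limit $0$; because $\{z\in X:\lim_{n}\frac1n\sum_{i=1}^n\|T_iz\|=0\}$ is a linear subspace (subadditivity of the norm together with homogeneity), it contains $\text{span}(X_0)$, so $\text{MProx}(T_i,\mathbf 0)$ contains the dense subspace $\text{span}(X_0)$ of $Z$. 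By Lemma~\ref{mean proximal}~$(\ref{Lem: MProx cell G-delta})$, the cell intersected with $Z$ is a dense $G_\delta$, hence residual in $Z$.

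To feed in the unbounded behaviour I use the data $(y_k)_k\subset Z$: after discarding zero terms and normalising to $w_k=y_k/\|y_k\|$, property~(2) gives a bounded sequence in $Z$ with $\sup_k\frac1{N_k}\sum_{i=1}^{N_k}\|T_iw_k\|=\infty$. Regarding $(T_i|_Z)$ as operators from the Banach space $Z$ to the normed space $X$, Theorem~\ref{thm:mean-sensitive}~$(\ref{thm5: bounded sequence})$ shows $(T_i|_Z)$ is mean sensitive, so by $(\ref{thm3: residual points in X})$ the set $\{y\in Z:\limsup_{n}\frac1n\sum_{i=1}^n\|T_iy\|=\infty\}$ is residual in $Z$. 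The Baire category theorem in the complete space $Z$ then gives a point $v$ in both residual sets, i.e.\ a vector with $\liminf_n\frac1n\sum_{i=1}^n\|T_iv\|=0$ and $\limsup_n\frac1n\sum_{i=1}^n\|T_iv\|=\infty$; this absolutely mean irregular vector is in particular semi-irregular, so Proposition~\ref{mean Li-Yorke-pair-vector} finishes the converse.

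For the forward implication I would use the structural hypotheses exactly once, to produce a vector of infinite upper average. If $(T_i)_i$ is mean Li-Yorke chaotic, Proposition~\ref{mean Li-Yorke-pair-vector} supplies an absolutely mean semi-irregular vector, and since $(T_i)_i$ is almost-commuting and submultiplicative, Theorem~\ref{thm:euqi-lmyc} upgrades it to an absolutely mean irregular vector $v$. I then verify the criterion with $X_0=\mathbb K v$: property~(1) holds since $\liminf_n\frac1n\sum_{i=1}^n\|T_i(\lambda v)\|=|\lambda|\liminf_n\frac1n\sum_{i=1}^n\|T_iv\|=0$, and as $\overline{\text{span}(X_0)}=\mathbb K v\ni v$ with $\limsup_n\frac1n\sum_{i=1}^n\|T_iv\|=\infty$, for each $k$ I may choose an increasing $N_k$ with $\frac1{N_k}\sum_{i=1}^{N_k}\|T_iv\|\ge k\|v\|$; taking $y_k=v$ gives property~(2).

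The only delicate point is the density of the mean proximal cell in the converse: property~(1) controls individual vectors of $X_0$ only through $\liminf$, and the good subsequences of different vectors need not be compatible, so a priori $\text{span}(X_0)$ need not lie in the cell. The case split is exactly what dissolves this, since in the only case where density is at stake every vector of $X_0$ has a genuine vanishing limit, turning the cell into a subspace and making subadditivity available. I expect the remaining care to be purely bookkeeping: checking that Theorem~\ref{thm:mean-sensitive} applies verbatim to the restrictions $T_i|_Z\colon Z\to X$, and that the $G_\delta$ assertions of Lemma~\ref{mean proximal} relativise to $Z$.
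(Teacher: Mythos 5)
Your proof is correct, and although its skeleton matches the paper's --- restrict to $Z=\overline{\mathrm{span}(X_0)}$, get mean sensitivity from property (2) via Theorem~\ref{thm:mean-sensitive}, get density of the mean proximal cell of $\mathbf{0}$ from property (1), conclude through an absolutely mean irregular vector, and prove the other implication through Theorem~\ref{thm:euqi-lmyc} --- it genuinely diverges at the point you call delicate, and your version is the stronger one. The paper's proof simply asserts that property (1) yields $\liminf_{n\to\infty}\frac{1}{n}\sum_{i=1}^{n}\Vert T_iy\Vert=0$ for every $y\in\mathrm{span}(X_0)$; this inference is invalid as stated, because the subsequences along which different elements of $X_0$ achieve small averages need not be compatible (two nonnegative sequences can each have liminf $0$ while their sum stays above $1$). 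Your case split --- either some $x\in X_0$ has positive upper average, hence is absolutely mean semi-irregular and Proposition~\ref{mean Li-Yorke-pair-vector} finishes at once, or else every $x\in X_0$ has genuine limit $0$, so the vanishing set is a linear subspace containing $\mathrm{span}(X_0)$ --- supplies exactly the missing argument. A second improvement: the paper then cites Theorem~\ref{thm:equi-dmlyc} for the restricted sequence, which is not literally applicable, since $Z$ need not be $(T_i)_i$-invariant and so $(T_i|_Z)_i$ is not a submultiplicative sequence of operators \emph{on} a Banach space; your route instead intersects the two residual subsets of $Z$ (coming from Lemma~\ref{mean proximal} and Theorem~\ref{thm:mean-sensitive}, both of which do apply to operators from the Banach space $Z$ to a normed space) by the Baire category theorem, which also makes transparent that this direction uses neither almost-commutativity nor submultiplicativity. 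In the forward direction your explicit witness $X_0=\mathbb{K}v$, $y_k=v$, is more elementary than the paper's choice $X_0=\mathrm{MProx}(T_i|_{\widetilde{X}},\mathbf{0})$, though both pass through Theorem~\ref{thm:euqi-lmyc}, which is where the structural hypotheses actually enter. One blemish you share with the paper: property (2) as literally stated is satisfied by $y_k=\mathbf{0}$, so both proofs implicitly read the criterion as supplying infinitely many nonzero $y_k$ (you discard zero terms, the paper divides by $\Vert y_k\Vert$); under the literal reading the theorem would be false, so this is a defect of the definition rather than of either proof, but it deserves a remark.
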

\begin{proof}
    Suppose that $(T_i)_{i=1}^\infty$ satisfies the mean Li-Yorke chaos criterion. Clearly, $\overline{\mathrm{span}(X_0)}$ is a closed subspace of $X$. Let $\tilde{y}_k=\frac{1}{\Vert y_k\Vert}y_k$. Clearly $(\tilde{y}_k)_k$ is a bounded sequence and $\sup_k\frac{1}{N_k}\sum_{i=1}^{N_k}\Vert T_i\tilde{y}_k\Vert=\infty$. By Theorem \ref{thm:mean-sensitive} we know that $\left(T_i|_{\overline{\mathrm{span}(X_0)}}\right)_{i=1}^\infty$ is mean sensitive. Noticing that for any $y\in\mathrm{span}(X_0)$, we have
    \[
        \liminf_{n\to\infty}\frac{1}{n}\sum_{i=1}^n\Vert T_iy\Vert=0.
    \]
    Hence we have $\mathrm{MProx}\left( T_i|_{\overline{\mathrm{span}(X_0)}},\mathbf{0}\right)$ is dense in $\overline{\mathrm{span}(X_0)}$.
    By Theorem \ref{thm:equi-dmlyc}, we know that $\left(T_i|_{\overline{\mathrm{span}(X_0)}}\right)_{i=1}^\infty$  is mean Li-Yorke chaotic. Then $(T_i)_{i=1}^\infty$ is also mean Li-Yorke chaotic.

    Now suppose that $(T_i)_{i=1}^\infty$ is mean Li-Yorke chaotic. By Theorem \ref{thm:euqi-lmyc}, there exists a closed subspace $\widetilde{X}$ which has a residual subset of absolutely mean irregular vectors. Let $X_0=\mathrm{MProx}(T_i|_{\widetilde{X}},\mathbf{0})$. By Theorem \ref{thm:equi-dmlyc} and Theorem \ref{thm:mean-sensitive}, we know that $X_0$ satisfies the requirements of the mean Li-Yorke chaos criterion.
\end{proof}

\begin{rem}
    It is worth noting that in Section 7.3 of \cite{GP2011} discretizations of $C_0$-semigroups are considered, which certainly fulfill the commutativity condition, and under general conditions also fulfill the submultiplicative property. 
    See \cites{ABMP2013, CKMM2016, Wu2014, ZY2023} for the research on Li-Yorke and distributional chaos for $C_0$-semigroups.
    Therefore, the results in Section 4 can be applied to sequences of operators which are discretizations of $C_0$-semigroups. 
\end{rem}

\medskip 

\noindent \textbf{Acknowledgment.}
J. Li was suported in part by NSF of China (Grant no.~12222110).
X. Wang was supported in part by NSF of China (Grant no.~12301230) and STU Scientific Research Initiation Grant (SRIG, NTF22020).
J. Zhao (corresponding author) is supported by NSF of China (Grant no.~12301226).
The authors  would like to thank the referees for the helpful suggestions.

\begin{bibsection}
\begin{biblist}

\bib{ABMP2013}{article}{
   author={Albanese, A.A.},
   author={Barrachina, X.},
   author={Mangino, E.M.},
   author={Peris, A.},
   title={Distributional chaos for strongly continuous semigroups of operators},
   journal={Commun. Pure Appl. Anal.},
   volume={12},
   date={2013},
   number={5},
   pages={2069--2082},
   issn={1534-0392},
    review={3015670},
   doi={10.3934/cpaa.2013.12.2069},
} 

\bib{BM2009}{book}{
author={Bayart, Fr\'ed\'eric},
   author={Matheron, \'Etienne},
   title={Dynamics of linear operators},
   series={Cambridge Tracts in Mathematics},
   volume={179},
   publisher={Cambridge University Press, Cambridge},
   date={2009},
   pages={xiv+337},
   isbn={978-0-521-51496-5},
   review={\MR{2533318}},
   doi={10.1017/CBO9780511581113},
        }

\bib{BBMP2011}{article}{
        author={Berm\'{u}dez, T.},
        author={Bonilla, A.},
        author={Mart\'{\i}nez-Gim\'{e}nez, F.},
        author={Peris, A.},
        title={Li-Yorke and distributionally chaotic operators},
        journal={J. Math. Anal. Appl.},
        volume={373},
        date={2011},
        number={1},
        pages={83--93},
        issn={0022-247X},
        review={\MR{2684459}},
        doi={10.1016/j.jmaa.2010.06.011},
        }

\bib{BG2003}{article}{
   author={Bernal-Gonz\'{a}lez, L.},
   author={Grosse-Erdmann, K.-G.},
   title={The hypercyclicity criterion for sequences of operators},
   journal={Studia Math.},
   volume={157},
   date={2003},
   number={1},
   pages={17--32},
   issn={0039-3223},
   review={\MR{1980114}},
   doi={10.4064/sm157-1-2},
}

\bib{B2000}{article}{
    AUTHOR = {Bernal-Gonz\'{a}lez, L.},
     TITLE = {Universal images of universal elements},
   JOURNAL = {Studia Math.},
    VOLUME = {138},
      date = {2000},
    NUMBER = {3},
     PAGES = {241--250},
      ISSN = {0039-3223},
  review= {\MR{1758857}},
       DOI = {10.4064/sm-138-3-241-250},
}

\bib{BBMP2013}{article}{
        author={Bernardes, N. C., Jr.},
        author={Bonilla, A.},
        author={M\"{u}ller, V.},
        author={Peris, A.},
        title={Distributional chaos for linear operators},
        journal={J. Funct. Anal.},
        volume={265},
        date={2013},
        number={9},
        pages={2143--2163},
        issn={0022-1236},
        review={\MR{3084499}},
        doi={10.1016/j.jfa.2013.06.019},
        }
        
     \bib{BBP2020}{article}{
            author={Bernardes, N. C., Jr.},
            author={Bonilla, A.},
            author={Peris, A.},
            title={Mean Li-Yorke chaos in Banach spaces},
            journal={J. Funct. Anal.},
            volume={278},
            date={2020},
            number={3},
            pages={108343, 31},
            issn={0022-1236},
            review={\MR{4030290}},
            doi={10.1016/j.jfa.2019.108343},
        }

\bib{BBPW2018}{article}{
            author={Bernardes, N. C., Jr.},
            author={Bonilla, A.},
            author={Peris, A.},
            author={Wu, X.},
            title={Distributional chaos for operators on Banach spaces},
            journal={J. Math. Anal. Appl.},
            volume={459},
            date={2018},
            number={2},
            pages={797--821},
            issn={0022-247X},
            review={\MR{3732556}},
            doi={10.1016/j.jmaa.2017.11.005},
        }

\bib{BP1999}{article}{
   author={B\`es, J.},
   author={Peris, A.},
   title={Hereditarily hypercyclic operators},
   journal={J. Funct. Anal.},
   volume={167},
   date={1999},
   number={1},
   pages={94--112},
   issn={0022-1236},
   review={\MR{1710637}},
   doi={10.1006/jfan.1999.3437},
}

\bib{CKMM2016}{article}{
   author={Conejero, J. A.},
   author={Kosti\'{c}, M.},
   author={Miana, P. J.},
   author={Marina-Arcila, M.},
   title={Distributionally chaotic families of operators on Fr\'{e}chet
   spaces},
   journal={Commun. Pure Appl. Anal.},
   volume={15},
   date={2016},
   number={5},
   pages={1915--1939},
   issn={1534-0392},
   review={\MR{3538889}},
   doi={10.3934/cpaa.2016022},
}

\bib{G1999}{article}{
   author={Grosse-Erdmann, K. G.},
   title={Universal families and hypercyclic operators},
   journal={Bull. Amer. Math. Soc. (N.S.)},
   volume={36},
   date={1999},
   number={3},
   pages={345--381},
   issn={0273-0979},
   review={\MR{1685272}},
   doi={10.1090/S0273-0979-99-00788-0},
}

\bib{GP2011}{book}{
            author={Grosse-Erdmann, K. G.},
            author={Peris Manguillot, A.},
            title={Linear chaos},
            series={Universitext},
            publisher={Springer, London},
            date={2011},
            pages={xii+386},
            isbn={978-1-4471-2169-5},
            review={\MR{2919812}},
            doi={10.1007/978-1-4471-2170-1},
        }

\bib{HSX2021}{article}{
   author={He, S.},
   author={Sun, X.},
   author={Xiao, M.},
   title={Characteristics of ($\alpha$,$\beta$)-mean Li-Yorke chaos of
   linear operators on Banach spaces},
   journal={Internat. J. Bifur. Chaos Appl. Sci. Engrg.},
   volume={31},
   date={2021},
   number={8},
   pages={Paper No. 2150120, 18},
   issn={0218-1274},
   review={\MR{4278549}},
   doi={10.1142/S0218127421501200},
}

\bib{JL2022}{article}{
            author={Jiang, Z.},
            author={Li, J.},
            title={Chaos for endomorphisms of completely metrizable groups and linear operators on Fr\'echet spaces},
            journal={J. Math. Anal. Appl.},
            volume={543},
            date={2025},
            number={2},
            pages={129033},
            issn={0022-247X},
             review={\MR{4824645}},
            doi={10.1016/j.jmaa.2024.129033},
        }

\bib{LTY2015}{article}{
            author={Li, J.},
            author={Tu, S.},
            author={Ye, X.},
            title={Mean equicontinuity and mean sensitivity},
            journal={Ergodic Theory Dynam. Systems},
            volume={35},
            date={2015},
            number={8},
            pages={2587--2612},
            issn={0143-3857},
            review={\MR{3456608}},
            doi={10.1017/etds.2014.41},
        }

\bib{LY2016}{article}{
   author={Li, J.},
   author={Ye, X.},
   title={Recent development of chaos theory in topological dynamics},
   journal={Acta Math. Sin. (Engl. Ser.)},
   volume={32},
   date={2016},
   number={1},
   pages={83--114},
   issn={1439-8516},
   review={\MR{3431162}},
   doi={10.1007/s10114-015-4574-0},
}

\bib{LY1975}{article}{
            author={Li, T. Y.},
            author={Yorke, J. A.},
            title={Period three implies chaos},
            journal={Amer. Math. Monthly},
            volume={82},
            date={1975},
            number={10},
            pages={985--992},
            issn={0002-9890},
            review={\MR{385028}},
            doi={10.2307/2318254},
        }

\bib{LH2015}{article}{
            author={Luo, L.},
            author={Hou, B.},
            title={Some remarks on distributional chaos for bounded linear operators},
            journal={Turkish J. Math.},
            volume={39},
            date={2015},
            number={2},
            pages={251--258},
            issn={1300-0098},
            review={\MR{3311688}},
            doi={10.3906/mat-1403-41},
        }

\bib{Mycielski1964}{article}{
            author={Mycielski, J.},
            title={Independent sets in topological algebras},
            journal={Fund. Math.},
            volume={55},
            date={1964},
            pages={139--147},
            issn={0016-2736},
            review={\MR{MR173645}},
            doi={10.4064/fm-55-2-139-147},
        }  

\bib{P1990}{article}{
            author = {Protopopescu, V.},
            title = {Linear vs nonlinear and infinite vs finite: An interpretation of chaos},
            doi = {10.2172/6502672},
            url = {https://www.osti.gov/biblio/6502672},
            journal = {Technical Report},
            place = {United States},
            year = {1990},
        }

\bib{Wu2014}{article}{
   author={Wu, X.},
   title={Li-Yorke chaos of translation semigroups},
   journal={J. Difference Equ. Appl.},
   volume={20},
   date={2014},
   number={1},
   pages={49--57},
    issn={1023-6198},
    review={\MR{3173537}},
    doi={10.1080/10236198.2013.809712},
} 
        
\bib{YHC2023}{article}{
   author={Yin, Z.},
   author={He, S.},
   author={Chen, Z.},
   title={Mean Li-Yorke chaos and mean sensitivity in non-autonomous
   discrete systems},
   journal={J. Dyn. Control Syst.},
   volume={29},
   date={2023},
   number={1},
   pages={245--262},
   issn={1079-2724},
   review={\MR{4559455}},
   doi={10.1007/s10883-022-09599-w},
} 

\bib{ZY2023}{article}{
   author={Zhu, P. },
   author={Yang, Q.},
   title={Li-Yorke $n$-chaos and distributional $n$-chaos in Banach spaces},
   journal={Proc. Amer. Math. Soc.},
   volume={151},
   date={2023},
   number={12},
   pages={5353--5365},
    issn={0002-9939},
    review={\MR{4648931}},
    doi={10.1090/proc/16561},
} 

    \end{biblist}
    
\end{bibsection}

\end{document}